\newenvironment{enumeratea}
{\bgroup

\begin{enumerate}}
{\end{enumerate}\egroup}
\newenvironment{enumeratei}
{\bgroup

\begin{enumerate}}
{\end{enumerate}\egroup}
\def\E6#1#2#3#4#5#6{\begin{matrix} 
#1 & #2 & #3 & #4 & #5
\\
 & & #6 & & 
 \end{matrix}}
\def\EE7#1#2#3#4#5#6#7{\begin{smallmatrix} 
#1 & #2 & #3 & #4 & #5 & #6
\\
 & & #7 & & & 
 \end{smallmatrix}}
\def\EEE8#1#2#3#4#5#6#7#8{\begin{smallmatrix} 
#1 & #2 & #3 & #4 & #5 & #6 & #7 
\\
 & & #8 & & & & 
 \end{smallmatrix}}
\numberwithin{equation}{section}
\newtheorem{thm}{Theorem}[section]
\newtheorem{prop}[thm]{Proposition} 
\newtheorem{lem}[thm]{Lemma}
\newtheorem{cor}[thm]{Corollary}
\theoremstyle{definition}
\newtheorem{nota}[thm]{Notation}
\def\eg{{e.g.}\kern.3em}
\def\ie{{i.e.}\kern.3em}
\def\resp{resp.\kern.3em}
\def\loccit{\textit{loc.\kern2pt cit.}\kern.3em}
\def\Cor{Cor.\kern.2em} 
\def\Lem{Lem.\kern.2em} 
\def\Prop{Prop.\kern.2em} 
\def\Th{Th.\kern.2em} 
\def\calD{\mathcal{D}}
\def\NN{\mathbb{N}}
\def\ZZ{\mathbb{Z}}
\def\CC{\mathbb{C}}
\def\delodd#1{\delta_{\text{odd},#1}}
\let\vep\varepsilon
\begin{document}

\title{Classification of the root systems $R(m)$} 

\author{Patrick Polo} 

\address{Institut de Mathématiques de Jussieu-Paris Rive Gauche\\
Sorbonne Université -- Campus Pierre et Marie Curie\\
4, place Jussieu -- Boîte Courrier 247\\
F-75252 Paris Cedex 05\\
France}

\email{patrick.polo@imj-prg.fr}

\subjclass{17B22,22E46} 

\keywords{Root systems, height function, Weyl dimension formula}

\begin{abstract} 
  Let $R$ be a reduced irreducible root system, $h$ its Coxeter number and $m$
a positive integer smaller than $h$. Choose a base of $R$, whence a
corresponding height function, and let $R(m)$ be the set of roots whose height
is a multiple of $m$. In a recent paper, S. Nadimpalli, S. Pattanayak and D.
Prasad studied, for the purposes of character theory at torsion elements, the
root systems $R(m)$; in particular, they
introduced a constant $d_m$ which is always the dimension of a representation
of the semisimple, simply-connected group with root system dual to $R(m)$ and equals $1$
if the roots of height $m$ form a base of $R(m)$, and proved this property when
$R$ is of type $A$ or $C$, and also in type $B$ if $m$ is odd. In this paper,
we complete their analysis by determining a
base of $R(m)$ and computing the constant $d_m$ in all cases.
\end{abstract} 

\date{May 14, 2025}  
\maketitle 

\section{Introduction} 

Let $R$ be a reduced, irreducible root system in a real vector space $V$. 
Let $h$ be its Coxeter number and $m$ an integer such that $2\leq m < h$. 
Choose a set $R^+$ of positive roots, whence a base $\Delta$ of $R$ and the corresponding 
height function $\mathrm{ht}$ on $R$, 
and let $R(m)$ denote the set of roots whose height is a multiple of $m$. 
It is a closed root subsystem, with set of positive roots $R^+(m) = R(m)\cap R^+$. 
Let $\Gamma(m)$ denote the set of simple roots of $R(m)$ contained in $R^+(m)$. 
Denote by $R_m$ the set of roots of height $m$. One has  $\beta-\gamma \not\in R$ 
for every $\beta\not= \gamma$ in $R_m$, and therefore $R_m \subset \Gamma(m)$.

The root systems $R(m)$ were studied in \cite{NPP25} in the following context. Let 
$X \subset V^*$ be the lattice dual to $\ZZ R$, let $\Phi$, \resp $\Phi_m$, be the root system dual to 
$R$, \resp $R(m)$, 
and let $G$ and $H(m)$ be split reductive groups over $\CC$ with root data $(X,\ZZ R, \Phi, R)$ and 
$(X,\ZZ R, \Phi_m, R(m))$ respectively.  
In \cite{NPP25}, the authors consider a certain reductive group $G(m)$ isogenous to $H(m)$ 
and, in their study of the values of irreducible characters $\Theta_\lambda$ of $G$ on a conjugacy class of principal elements of order $m$, they introduced the constant 
\begin{equation}\label{def-d_m}
d_m = \prod_{\beta\in R^+(m)} \dfrac{ \langle \rho/m, \beta \rangle }{  \langle \rho_m, \beta \rangle} , 
\end{equation} 
where $\rho$, \resp $\rho_m$, is the half-sum of the elements of $\Phi^+$, \resp $\Phi_m^+$, 
and observed that, by Weyl dimension formula, $d_m$ is the dimension of the irreducible representation of 
$H(m)$ whose highest weight $\omega$ corresponds to the nodes of the Dynkin diagram of $\Gamma(m)$ which are not in $R_m$, see \loccit  \Th 4.1. 
Further, in their Section 5 they proved that  $R_m = \Gamma(m)$, whence $d_m = 1$, 
when $R$ is of type $A_n$ or $C_n$ and also in type $B_n$ if $m$ is odd, 
and in Sections 13--17 of their paper they described their group $G(m)$ when $m \vert h$. 

We complete their results by determining in all cases $\Gamma(m)$, the weight $\omega$ (which, if non-zero, is always a minuscule fundamental weight) and the constant $d_m$.

{\sc Acknowledgements.} This paper was written during a stay of the author at IIT Bombay in March-April 2025. 
I warmly thank Dipendra Prasad for the invitation to IITB and for asking me the question of 
classifying the root subsystems $R(m)$ beyond the results  of \cite{NPP25} and computing their constant $d_m$.

\section{Statement of the result} 

We use the following notation. In type $D_{n+1}$ we number the vertices of the Dynkin diagram by $0,1,\dots, n$, so that $D_2 = 2A_1$ corresponds to the vertices $0,1$. That is, the simple roots are: 
\begin{equation}\label{simple-D}
\alpha_0 = \vep_1 + \vep_0 \qquad \text{and} \qquad \alpha_i = \vep_i - \vep_{i-1} \quad \text{for } i = 1,\dots, n.
\end{equation} 
Similarly in type $B_n$  the simple roots are: 
\begin{equation}\label{simple-B}
\alpha_1 = \vep_1   \qquad \text{and} \qquad \alpha_i = \vep_i - \vep_{i-1} \quad \text{for } i = 2,\dots, n.
\end{equation} 
For $k\in \mathbb{N}^*$ such that $3k\leq n$, consider the following roots of height $4k$ in $D_{n+1}$ and $B_n$ respectively: 
\begin{equation}\label{def-gamma_k}
\begin{cases}  
\gamma_{4k} = \alpha_0 + \alpha_1 + 2 (\alpha_2 + \cdots + \alpha_k) + \alpha_{k+1} + \cdots + \alpha_{3k}  , 
\\
\gamma'_{4k}   = 2 (\alpha_1 + \cdots + \alpha_k) + \alpha_{k+1} + \cdots + \alpha_{3k}.  
\end{cases} 
\end{equation} 
Note that in both cases this root is $\vep_k + \vep_{3k}$. 
Further, 
with the obvious inclusions $D_4 \subset E_6$ and $D_7 \subset E_8$, we may consider $\gamma_4$ as a root of height $4$ in $E_6, E_7, E_8$ and $\gamma_8$ as a root of height $8$ in $E_8$.

\smallskip 
Let us say that a root subsytem of $R$ is of \emph{Levi type} if it  is $W$-conjugate (where $W$ is the Weyl group) to the root subsystem generated by a subset $I$ of $\Delta$.

\begin{thm}\label{main-thm} {\rm (1)} $R_m$ is always contained in a $W$-conjugate of $\Delta$. 

\smallskip {\rm (2)} One has $\Gamma(m) = R_m$ and hence $R(m)$ is of Levi type, in all cases except the cases below. 
In these cases, $\Gamma(m)$ is the union of $R_m$ and a root $\delta_{2m}$ of height $2m$,  is not of Levi 
type and, denoting by $X^\dag$ the connected component of $\Gamma(m)$ containing $\delta_{2m}$ we list the types of 
$X^\dag$ and $X^0 = X^\dag - \{ \delta_{2m} \}$ and the dimension $d_m$ of the fundamental representation of $G(m)$ corresponding to the simple coroot $\delta_{2m}$:  
\begin{enumeratea} 
\item $m = 2k$ and $R$ of type $D_{n+1}$ or $B_n$ with $n\geq 3k$. Then $\delta_{2m} = \vep_k + \vep_{3k}$ and, setting 
$q = \lfloor (n-k)/m \rfloor$ one has $X^\dag = D_{q+1}$ and $X^0 = A_q$, whence $d_m = 2^q$. 

\smallskip 
\item $m = 2$ and $R$ of type $E_6, E_7, E_8$. Then  $\delta_{4} = \gamma_4$ and one has the table: 
$$
\begin{array}{|c|c|c|c|}
\hline 
R & X^\dag & X^0 & d_2 
\\
\hline
E_6 & A_1 & \varnothing & 2 
\\
\hline
E_7 & A_7 & A_6 & 8 
\\
\hline
E_8 & D_8 & D_7 & 16 
\\
\hline
\end{array}
$$

\smallskip 
\item $m = 3$ and $R$ of type $E_6, E_7, E_8$.  Then $\delta_6$ is the sum of the simple roots in type $E_6$ and one has the table: 
$$
\begin{array}{|c|c|c|c|}
\hline 
R & X^\dag & X^0 & d_3 
\\
\hline
E_6 & A_2 & A_1 & 3
\\
\hline
E_7 & A_2 & A_1 & 3
\\
\hline
E_8 & A_8 & A_7 & 9
\\
\hline
\end{array}
$$

\smallskip 
\item $m \in \{4,5,8\}$ and $R$ of type $E_8$. Then one has the table: 
$$
\begin{array}{|c|c|c|c|c|}
\hline 
m & \delta_{2m}  & X^\dag & X^0 & d_m
\\
\hline 
4 & \gamma_8 & A_3 & A_2 & 4 
\\
\hline 
5 & \EEE8{1}{2}{2}{1}{1}{1}{1}{1} & A_4 & A_3 & 5 \rule{0pt}{12pt} 
\\[2pt]
\hline 
8 &  \EEE8{1}{2}{3}{3}{3}{2}{1}{1} & A_1 & \varnothing & 2  \rule{0pt}{12pt} 
\\[2pt] 
\hline 
\end{array}
$$
\smallskip 
\item $m \in \{2,3\}$ and $R$ of type $F_4$. Denoting the simple roots by $a,b,c,d$, with $a,b$ long and $b+c \in R$, 
one has the table: 
$$
\begin{array}{|c|c|c|c|c|}
\hline 
m &  \delta_{2m} & X^\dag & X^0 & d_m 
\\
\hline 
2 & a + b + 2c & A_1 & \varnothing & 2 
\\
\hline 
3 & a + b + 2c + 2d & A_2 & A_1 & 3 
\\
\hline 
\end{array}
$$

\smallskip 
\item $m = 2$ and $R$ of type $G_2$. Let $\alpha, \beta$ be the simple roots,  with $\beta$ long. Then  
$R^+(2) = \{\beta+\alpha, \beta+3\alpha\}$ is of type  $A_1 + A_1^\dag$, hence $d_2  = 2$. 
\end{enumeratea} 

\smallskip {\rm (3)} So, when $d_m\not=1$ it corresponds to the natural representation or a spin one of a simple factor of $G(m)$ of type $A$ or $D$; in particular this is 
a minuscule representation. 
\end{thm}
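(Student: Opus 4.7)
The plan is to carry out a case-by-case analysis over the Cartan--Killing types. For the classical types $A_n, B_n, C_n, D_{n+1}$ I would use the standard $\vep_i$-coordinates, so that $\mathrm{ht}(\beta) \equiv 0 \pmod{m}$ becomes an explicit arithmetic condition on the coordinates of $\beta$; for the exceptional types $E_6, E_7, E_8, F_4, G_2$ I would work directly from the standard tabulation of positive roots by height.

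For each pair $(R, m)$ the argument proceeds in three steps. First, enumerate $R_m$ and exhibit explicitly an element $w \in W$ with $w(R_m) \subset \Delta$, proving part (1) in that case; this is immediate by inspection in the classical types and reduces to a short case-check (using \eg Borel--de Siebenthal) in the exceptional ones. Second, determine $\Gamma(m) \setminus R_m$ by listing the roots of heights $2m, 3m, \dots$ up to $h-1$ and testing indecomposability in $R^+(m)$: a root $\delta$ is indecomposable iff no pair $\beta, \gamma \in R^+(m)$ satisfies $\beta + \gamma = \delta$. In types $A$ and $C$, and in type $B$ with $m$ odd, this check yields the empty set, recovering \cite{NPP25}. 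In types $D_{n+1}$ and $B_n$ with $m = 2k$ and $n \geq 3k$, the root $\vep_k + \vep_{3k}$ of height $4k$ is the unique (up to Weyl equivalence within $R(m)$) extra generator, and one verifies that every root of height $\geq 6k$ in $R(m)$ is decomposable. A finite inspection gives the tables for the exceptional types. Third, compute the Cartan integers $\langle \delta_{2m}^\vee, \beta \rangle$ for $\beta \in R_m$ to obtain the Dynkin diagram of $\Gamma(m)$, whence the types of $X^\dag$ and $X^0$ listed in the theorem.

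Once $\Gamma(m)$ and $X^\dag$ are identified, the constant $d_m$ equals the dimension of the fundamental representation of $H(m)$ with highest weight $\omega$ dual to the coroot $\delta_{2m}^\vee$ in $X^\dag$. In every listed case $X^\dag$ is of type $A_q$ or $D_{q+1}$ with $\delta_{2m}$ at an end node, so $\omega$ is the highest weight of the standard representation (dimension $q+1$) or of a half-spin representation (dimension $2^q$); both are minuscule, giving part (3). I expect the main obstacle to be the enumeration in the exceptional types, especially $E_8$ at $m \in \{2,3,4,5,8\}$, where many roots at each height $jm$ must be tested for indecomposability and the exhaustiveness of the resulting list confirmed. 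A secondary subtlety is that in types $B_n, C_n, F_4, G_2$ the Dynkin diagram of $R(m)$ is controlled by the \emph{coroot} pairings, so the identification of $X^\dag$ with $A_q$ or $D_{q+1}$ and the matching of $\delta_{2m}^\vee$ with a specific minuscule weight requires care.
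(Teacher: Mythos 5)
Your plan follows the same overall route as the paper: a type-by-type determination of $R_m$ and of the indecomposable elements of $R^+(m)$, followed by reading off $d_m$ as the dimension of a minuscule fundamental representation. The differences are in the verification devices, and two of them matter. First, for the infinite families $B_n$, $D_{n+1}$ your step ``one verifies that every root of height $\geq 6k$ in $R(m)$ is decomposable'' is an unbounded family of checks as $n$ varies; the paper replaces it by a uniform minimality argument (Lemma \ref{lem-BD}): a minimal $\delta\in R^+(m)$ not expressible as a sum of elements of $R_m$ must have the form $\vep_k+\vep_{k+r}$, and repeatedly subtracting roots $\vep_j-\vep_{j-m}$ of height $m$ forces $r=m$ and then $2k=m$, so $\vep_k+\vep_{3k}$ is the only possible extra generator for all $n$. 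You need an argument of this kind, not an inspection. Second, for the exceptional types the paper does not sweep through all roots of height $2m,3m,\dots$; it computes $\vert R(m)\vert$ in advance from the fact that the height distribution $(\pi_1,\pi_2,\dots)$ is the dual of the partition of the exponents, and then certifies a candidate base by checking that the root system it generates has exactly $\vert R(m)\vert$ positive roots. Your decomposability sweep would also work, but the counting certificate is what makes the $E_8$ cases short and auditable, and you should say how exhaustiveness is confirmed. A further point: for part (1) in the classical types the paper's mechanism is constructive --- it adjoins auxiliary roots $\eta_r$ to the diagram of $R_m$ so as to complete it to a base of the same type as $R$, which exhibits $R_m$ inside a $W$-conjugate of $\Delta$ directly; ``immediate by inspection'' undersells this. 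Finally, your dichotomy ``standard representation of dimension $q+1$ or half-spin of dimension $2^q$'' omits one case that actually occurs: for $E_8$, $m=2$ one has $X^\dag=D_8$ with $X^0=D_7$, so $\delta_4$ sits at the \emph{vector} node and $d_2=16$ is the natural representation of $D_8$, not a spin one; this is still minuscule, so part (3) survives, but the formula $2^q$ does not apply there.
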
 

\section{Proofs}  

\subsection{The partition of heights}\label{dual-part}  
For $k\in \mathbb{N}^*$, denote by $\pi_k$ the number of elements of $R^+$ of height $k$. 
Recall that the sequence $(\pi_1,\pi_2,\dots)$ is the dual of the partition of exponents of $R$; see 
\cite{Kos59}, \Cor 8.7 or  \cite{Mac72}, \Cor 2.7. 
Thus, one obtains in types $B_n$,  $C_n$ and $D_{n+1}$ that:
\begin{equation}\label{card-R_m} 
\pi_k =  
\begin{cases} 
n+1  - \lfloor k/2\rfloor & \text{ if $R$ has type $D_{n+1}$ and } 1\leq k \leq n; 
\\
n  - \lfloor k/2\rfloor & \text{ else,} 
\end{cases}
\end{equation} 
while in type $A_{n-1}$ one has $\pi_k = n-k$ for $k=1,\dots, n-1$.

\subsection{Notation} In types $B_n, C_n$ and $D_{n+1}$, for $i = 0,\dots, m$, let $q(i)$ denote the largest integer such that $i + m q(i) \leq n$, \ie 
\begin{equation}\label{def-q-C}
q(i) =  \lfloor \dfrac{n-i}{m} \rfloor .  
\end{equation}

\subsection{Types $A_{n-1}$ and $C_n$} In these cases, it was already proved in \cite{NPP25}, \Prop 5.1 that 
$\Gamma(m) = R_m$. 
In type $A$ it is well-know that every closed  root subsystem is of Levi type, whence the result in this case. 

For the sake of completeness, let us describe $R_m$ in type $A_{n-1}$ when $m < n$. 
In view of our numbering in cases $B,D$, we take as simple roots the roots $\vep_{i+1} - \vep_i$ for $i = 1,\dots, n-1$. 
Then $R_m$ consists of the roots $\vep_{i+m} - \vep_i$, for all $i\geq 1$ such that $i+m\leq n$. If $2m\leq n$, these roots form 
a union of $m$ Dynkin diagrams of type $A$, parametrised by the residue classes modulo $m$. 
Namely, if we write $n = qm + t$, with $q\geq 2$ and $0\leq t < m$, then the roots 
$\vep_{r+ im} - \vep_{r+ (i-1)m}$, for $r = 1,\dots, t$ and $i=1,\dots, q$, form a Dynkin diagram of type $A_q$, while 
for $r = t+1,\dots, m$ and $i=1,\dots, q-1$ they form a Dynkin diagram of type $A_{q-1}$. If $m < n < 2m$, \ie if $n = m + t$ with 
$1\leq t < m$, then $R_m$ consists only of the roots $\vep_{i+m} - \vep_i$, for $i=1,\dots, t = n{-}m$ and has type 
$(n{-}m) A_1$. Thus, one obtains the 

\begin{prop} Suppose that $R$ is of type $A_{n-1}$, with $n\geq 3$. 
If $n< 2m$ then $R(m)$ has type 
$(n{-}m) A_1$. Else, if $n = qm+t$ with $q\geq 2$ and $0\leq t < m$, then $R(m)$ has type 
$t A_q + (m-t) A_{q-1}$. 
\end{prop}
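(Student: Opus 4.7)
The plan is to invoke the already-quoted fact from \cite{NPP25}, \Prop 5.1, that $\Gamma(m) = R_m$ in type $A$. Since every closed root subsystem of a type $A$ system is itself a direct sum of type $A$ components, the problem reduces to identifying the connected components of the Dynkin diagram of $R_m$ and listing their ranks.

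First I would write $R_m$ explicitly. With the convention that the simple roots of $A_{n-1}$ are $\vep_{i+1} - \vep_i$ for $i = 1, \dots, n-1$, the positive roots are $\vep_j - \vep_i$ with $1 \leq i < j \leq n$, of height $j - i$; hence
\[
R_m = \{\,\vep_{i+m} - \vep_i \mid 1 \leq i \leq n-m\,\},
\]
a set of $n - m$ roots.

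Next I would determine the edges. Two elements $\beta = \vep_{i+m} - \vep_i$ and $\beta' = \vep_{j+m} - \vep_j$ with $i < j$ are adjacent in the Dynkin diagram iff $(\beta, \beta') \neq 0$, equivalently iff they share exactly one index, which in turn happens iff $j = i + m$. Organizing the indices $i \in \{1, \dots, n-m\}$ by residue class modulo $m$ thus partitions $R_m$ into at most $m$ chains, one per class, each a Dynkin diagram of type $A$. The only technical check -- and really the main point of the argument -- is the routine verification that roots sitting in distinct residue classes share no index at all and are therefore orthogonal, so no edge can connect two different chains.

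Finally, the count is immediate. Writing $n = qm + t$ with $0 \leq t < m$, the residue $r \in \{1, \dots, m\}$ contributes $q$ indices when $1 \leq r \leq t$ and $q - 1$ when $t < r \leq m$, producing respectively a component of type $A_q$ or $A_{q-1}$ (with the convention $A_0 = \varnothing$). This gives $t$ copies of $A_q$ and $m - t$ copies of $A_{q-1}$, as claimed. In the boundary case $n < 2m$ one has $q = 1$ and $t = n - m$, so the $A_{q-1}$ summands disappear and one recovers $(n-m)A_1$, unifying the two parts of the statement.
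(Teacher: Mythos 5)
Your proposal is correct and follows essentially the same route as the paper: both invoke \cite{NPP25}, Prop.~5.1 to reduce to the Dynkin diagram of $R_m = \{\vep_{i+m}-\vep_i\}$, split the index set into residue classes modulo $m$ to get chains of type $A$, and count $q$ or $q-1$ nodes per class according to whether the residue is at most $t$. Your unified treatment of the case $n<2m$ as $q=1$ is a minor streamlining of the paper's separate discussion, but the argument is the same.
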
 

\smallskip Next, in type $C_n$ with $n\geq 2$, we number the simple roots as follows:  
\begin{equation}\label{simple-C}
\alpha_1 = 2\vep_1   \qquad \text{and} \qquad \alpha_i = \vep_i - \vep_{i-1} \quad \text{for } i = 2,\dots, n.
\end{equation} 
Then for $j \geq i$ one has $\mathrm{ht}(\vep_j - \vep_i) = j-i$ and $\mathrm{ht}(\vep_j + \vep_i) = j+i-1$; in particular 
$\mathrm{ht}(2\vep_i) =2i-1$. For $r = 1,\dots, m$, set $\bar{r} = m+1-r$. 

\smallskip 
Set $k = \lfloor m/2\rfloor$, so that $m = 2k$ or $m= 2k+1$. For the sake of exposition, assume first that $m\leq n$. 
Then $R_m$ consists of the roots:
\begin{equation} 
\begin{array}{l} 
 \alpha_{i,b} = \vep_{i+bm} - \vep_{i+(b-1) m}, \; \text{for $i = 1,\dots, m$ and } b = 1,\dots, q(i); 
 \\[3pt] 
\beta_{r} = \vep_r + \vep_{\bar{r}}, \qquad \qquad \quad \, \text{for } r = 1,\dots, \lfloor (m+1)/2\rfloor. 
\end{array}
\end{equation} 
For $r = 1,\dots, k$, one obtains the following Dynkin diagram $\mathcal{D}(r)$:  
\begin{center} 
\begin{tikzpicture}
\coordinate (1) at (0,1) node at (1) {\small $\alpha_{r,q(r)}$ };
\draw (.55,1) -- (2.1,1);
\coordinate (3) at (2.5,1) node at (3) {\small \dots };
\draw (2.9,1) -- (4.2,1);
\coordinate (5) at (5.35,1) node at (5) {\small $\vep_{2m+r} {-} \vep_{m+r}$};
\draw (6.45,1) -- (7.65,1);
\coordinate (7) at (8.45,1) node at (7) {\small $\vep_{m+r} {-} \vep_r$};
\draw (8.45,.8) -- (4.6,.1);
\coordinate (0) at (4.1,0) node at (0) {\small $\vep_{r} {+} \vep_{\bar{r}}$};
\draw (3.7,-.15) -- (0,-.8);
\coordinate (2) at (0,-1) node at (2) {\small $\vep_{ m+\bar{r} } {-} \vep_{ \bar{r} }$};
\draw (.8,-1) -- (1.9,-1);
\coordinate (4) at (3,-1) node at (4) {\small $\vep_{ 2m+\bar{r} } {-} \vep_{ m+\bar{r} }$};
\draw (4.1,-1) -- (5.6,-1);
\coordinate (6) at (6,-1) node at (6) {\small \dots};
\draw (6.4,-1) -- (7.9,-1);
\coordinate (8) at (8.45,-1) node at (8) {\small $\alpha_{\bar{r} ,q(\bar{r})}$};
\end{tikzpicture} 
\end{center} 
which is of type $A_{d(r)}$ with $d(r) = q(r)+1 + q(\bar{r})$. 

Further, for $r = 1,\dots, k-1$, set $\eta_r = -\vep_{\bar{r} + q(\bar{r}) m} - \vep_{r+1 + q(r+1)m}$. Then $\eta_r$ is orthogonal 
to all roots of $R_m$ except that it is linked by a simple bond to the right end-point of $\mathcal{D}(r)$, which is 
$\alpha_{\bar{r} ,q(\bar{r})}$ if $q(\bar{r})\geq 1$ and $\beta_r$ else, and to the left end-point of  
$\mathcal{D}(r+1)$, which is $\alpha_{r+1,q(r+1)}$  if $q(r+1)\geq 1$ and $\beta_{r+1}$ else.  
Moreover, the $\eta_r$'s are pairwise orthogonal. Thus, we obtain the Dynkin diagram: 
\begin{center} 
\begin{tikzpicture}
\coordinate (1) at (0,1) node at (1) {\small $A_{1,d(1)}$ };
\draw (.55,1) -- (1.45,1);
\coordinate (2) at (1.65,1) node at (2) {\small $\eta_1$ };
\draw (1.85,1) -- (2.7,1);
\coordinate (3) at (3.25,1) node at (3) {\small $A_{2,d(2)}$ };
\draw (3.8,1) -- (4.7,1);
\coordinate (4) at (5.05,1) node at (4) {\small \dots};
\draw (5.3,1) -- (6.1,1);
\coordinate (5) at (6.5,1) node at (5) {\small $\eta_{k-1}$ };
\draw (6.9,1) -- (7.7,1);
\coordinate (6) at (8.3,1) node at (6) {\small $A_{k,d(k)}$};
\end{tikzpicture} 
\end{center} 
which is of type $A_N$, with $N = k-1 + \sum_{r=1}^k (1+q(r) + q(\bar{r}))$.  

\medskip If $m = 2k$, the diagrams $\mathcal{D}(1),\dots, \mathcal{D}(k)$ exhaust $R_m$. Since we know that 
$\vert R_m\vert = n - k$ 
we see that $N = n-1$. Then the long root $-2\vep_{1+q(1)m}$ is linked by a double bond to the 
left end-point of $\calD(1)$, which is $\alpha_{1,q(1)}$ if $q(1) \geq 1$ and $\vep_1 + \vep_m$ else, 
and is orthogonal to all other roots in the previous diagram of type $A_{n-1}$, so we obtain a diagram of type $C_n$.

\smallskip Now, if $m = 2k+1$ then for $r = k+1$ one has $r = \bar{r}$ and the remaining roots in $R_m$ form a diagram 
$\mathcal{D}(k+1)$: 
\begin{center} 
\begin{tikzpicture}
\coordinate (0) at (0,1) node at (0) {\small $2\vep_{k+1}$};
\draw (.5,1.05) -- (1.8,1.05);
\draw (.5,.95) -- (1.8,.95);
\coordinate (2) at (1.15,1) node at (2) {$>$}; 
\coordinate (1) at (3,1) node at (1) {\small $\vep_{m+k+1} {-} \vep_{k+1}$};
\draw (4.15,1) -- (5.6,1);
\coordinate (5) at (6,1) node at (5) {\small \dots};
\draw (6.4,1) -- (7.9,1);
\coordinate (7) at (8.85,1) node at (7) {\small $\alpha_{k+1,q(k+1)}$};
\end{tikzpicture} 
\end{center} 
which is of type $C_{1 + q_{k+1}}$. (If $q(k+1)=0$ there is only the long root $2\vep_{k+1}$.) 
Set $\eta_0 = -\vep_{k+1 + q(k+1) m} - \vep_{1 + q(1)m}$. Then $\eta_0$ is linked to the left end-point 
of $\calD(1)$ by a single bond and to the right end-point of $\calD(k+1)$ by a simple (\resp double) bond if $q(k+1)$ is 
$\geq 1$ (\resp $=0$), and is orthogonal to all other roots of $R_m$ and to $\eta_1, \dots, \eta_{k-1}$. 
Again, since we know that $\vert R_m \vert + k = n$ we obtain a diagram of type $C_n$. 

Thus, we have proved in both cases that $R_m$ is of Levi type. It remains to determine the $q(i)$'s. Before doing this, 
let us indicate how the foregoing has to be modified when $n <m$, say $n = m -i$ with $i=1,\dots, \lfloor m/2 \rfloor$. 
Then of course $q(i) = 0$ for all $i$. Further, one has:  
$$
\calD(r) = 
\begin{cases} 
\varnothing & \text{ for } r = 1,\dots, i ; 
\\
\{\vep_r + \vep_{m+1-r}\} &  \text{ for } r = i+1,\dots, k; 
\\
\{2 \vep_{k+1}\} & \text{ if } m = 2k+1 
\end{cases}
$$ 
and the diagrams $\calD(i+1), \dots, \calD(k)$, together with the roots $\eta_i, \dots, \eta_{k-1}$ as before, form the following diagram  
\begin{center} 
\begin{tikzpicture}
\coordinate (0) at (-2,1) node at (0) {\small $\calD$ :};
\coordinate (1) at (0,1) node at (1) {\small $\vep_{i+1}{+}\vep_n$ };
\draw (.7,1) -- (1.4,1);
\coordinate (2) at (2.4,1) node at (2) {\small $-\vep_n{-}\vep_{i+2}$ };
\draw (3.25,1) -- (3.95,1);
\coordinate (3) at (4.9,1) node at (3) {\small $\vep_{i+2}{+}\vep_{n-1}$ };
\draw (5.8,1) -- (6.5,1);
\coordinate (4) at (6.9,1) node at (4) {\small \dots};
\draw (7.2,1) -- (7.9,1);
\coordinate (5) at (8.5,1) node at (5) {\small $\vep_{k}{+}\vep_{\bar{k}}$ };
\end{tikzpicture} 
\end{center} 
of type $A_{2k-2i-1}$. If $m=2k$, the diagrams $\calD(i+1), \dots, \calD(k)$ exhaust $R_m$ and $\calD$ together 
with the following diagram of type $C_{i+1}$:  
\begin{center} 
\begin{tikzpicture}
\coordinate (0) at (0,1) node at (0) {\small $-2\vep_1$};
\draw (.5,1.05) -- (1.8,1.05);
\draw (.5,.95) -- (1.8,.95);
\coordinate (2) at (1.15,1) node at (2) {$>$}; 
\coordinate (1) at (2.4,1) node at (1) {\small $\vep_1{-} \vep_2$};
\draw (3,1) -- (4.4,1);
\coordinate (5) at (4.8,1) node at (5) {\small \dots};
\draw (5.1,1) -- (6.6,1);
\coordinate (7) at (7.3,1) node at (7) {\small $\vep_i{-} \vep_{i+1}$};
\end{tikzpicture} 
\end{center} 
form a diagram of type $C_n$. If $m = 2k+1$, the only remaining root in $R_m$ is $2\vep_{k+1}$ and $\calD$ together 
with the following diagram of type $C_{i+2}$:  
\begin{center} 
\begin{tikzpicture}
\coordinate (0) at (0,1) node at (0) {\small $-2\vep_1$};
\draw (.5,1.05) -- (1.8,1.05);
\draw (.5,.95) -- (1.8,.95);
\coordinate (2) at (1.15,1) node at (2) {$>$}; 
\coordinate (1) at (2.7,1) node at (1) {\small $-\vep_{k+1} {-} \vep_1$};
\draw (3.6,1) -- (4.8,1);
\coordinate (5) at (5.4,1) node at (5) {\small $\vep_1{-} \vep_2$};
\draw (6,1) -- (7.3,1);
\coordinate (6) at (7.7,1) node at (6) {\small \dots};
\draw (8,1) -- (9.2,1);
\coordinate (7) at (9.9,1) node at (7) {\small $\vep_i{-} \vep_{i+1}$};
\end{tikzpicture} 
\end{center}  
form a diagram of type $C_n$. This shows that $R_m$ is also of Levi type when $n = m-i$, and it has type: 
\begin{equation}\label{Ccase n < m}
\begin{cases}
(k-i) A_1 & \text{ if } m=2k; 
\\
(k-i) A_1 + C_1 & \text{ if } m=2k+1. 
\end{cases}
\end{equation} 

Now, let us come back to the determination of the $q(i)$'s when $m\leq n$. 
One has the following, where $\delodd{m} = 1$ if $m$ is odd and $=0$ else.

\begin{lem}\label{lem-Cn} Write $n = qm+ t$, with $q\in \NN^*$ and $0\leq t < m$. 

\smallskip {\rm (a)} 
If $2t \leq m$ then $R_m$ has type $t A_{2q} + (k{-}t) A_{2q-1} + \delodd{m} \, C_q$. 

\smallskip {\rm (b)} 
If $2t\geq m{+}1$, then $R_m$ has type $(m{-}t) A_{2q} + (k{-m+}t) A_{2q+1} + \delodd{m} \,  C_{q+1}$. 
\end{lem}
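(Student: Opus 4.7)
The plan is to reduce the statement to the combinatorics of the integers $q(i)$ computed for $n = qm + t$, applied to the description of the components $\calD(r)$ established just before the lemma. First I would evaluate $q(i) = \lfloor (n-i)/m \rfloor$ directly: this gives $q(i) = q$ for $1 \leq i \leq t$ and $q(i) = q-1$ for $t < i \leq m$. Since $\bar{r} = m+1-r$, this says that $q(\bar{r}) = q$ exactly when $r \geq m+1-t$. Each $\calD(r)$ with $1 \leq r \leq k$ has type $A_{d(r)}$ with $d(r) = q(r) + 1 + q(\bar{r}) \in \{2q-1, 2q, 2q+1\}$, the value being determined by how many of $\{r,\bar{r}\}$ lie in $\{1,\dots,t\}$.

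Next I would dispatch the two cases. In case (a), the hypothesis $2t \leq m$ forces $m+1-t > k$, so for every $r \leq k$ one has $\bar{r} > t$ and hence $q(\bar{r}) = q-1$; thus $d(r) = 2q$ when $r \leq t$ and $d(r) = 2q-1$ when $t < r \leq k$, yielding $t\,A_{2q} + (k-t)A_{2q-1}$. In case (b), $2t \geq m+1$ gives $m-t \leq k$, and the range $1 \leq r \leq k$ splits as $r \leq m-t$ (where $\bar{r} > t$ and $d(r) = 2q$) versus $m-t < r \leq k$ (where $\bar{r} \leq t$ and $d(r) = 2q+1$), producing $(m-t)A_{2q} + (k-m+t)A_{2q+1}$.

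Finally, when $m$ is odd, the extra component $\calD(k+1)$ of type $C_{1+q(k+1)}$ has to be incorporated. In case (a) one has $t \leq k$, so $q(k+1) = q-1$ and $\calD(k+1)$ is of type $C_q$; in case (b) one has $t \geq k+1$, so $q(k+1) = q$ and $\calD(k+1)$ is of type $C_{q+1}$. These are exactly the $\delodd{m}$ summands in the statement. The only delicate points are the two inequalities $m+1-t > k$ in case (a) and $m-t \leq k$ in case (b), each of which follows immediately from the respective hypothesis on $t$ (using $m \in \{2k, 2k+1\}$); the rest is mechanical bookkeeping on top of the diagrammatic setup already in place.
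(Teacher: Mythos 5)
Your proposal is correct and follows essentially the same route as the paper: evaluate $q(i)=\lfloor (n-i)/m\rfloor$ for $n=qm+t$ (getting $q$ for $i\leq t$ and $q-1$ for $i>t$), feed this into $d(r)=q(r)+1+q(\bar r)$ for the diagrams $\calD(r)$, and treat $\calD(k+1)$ of type $C_{1+q(k+1)}$ separately when $m$ is odd. The threshold inequalities you single out ($m+1-t>k$ in case (a), $m-t\leq k$ and $k\leq t$ in case (b)) are exactly the case analysis the paper performs, just stated slightly more explicitly.
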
  

\begin{proof} Recall that for $r=1,\dots,\lfloor (m{+}1)/2\rfloor$, we have set  $\bar{r}= m{+1-}r$. 

\smallskip 
 (a) If $2t \leq m$ then all $q(\bar{r})$ equal $q-1$, whereas $q(r)= q$ for $r=1,\dots, t$ and $q(r) = q-1$ otherwise. Hence the union of the diagrams $\calD(1),\dots, \calD(r)$ has type $t A_{2q} + (k-t) A_{2q-1}$. Further, if $m=2k+1$ then 
$\calD(k+1)$ has type $C_q$. 

\smallskip (b) If $2t\geq m+1$, all $q(r)$ equal $q$, whereas $q(\bar{r})= q$ for $m+1-t \leq r \leq k$, and also for $r = k+1$ if $m = 2k+1$, 
and $q(\bar{r})= q-1$ if $r =1,\dots, m-t$. Hence the union of the diagrams $\calD(1),\dots, \calD(r)$ has type $(m-t) A_{2q} + 
(k-m+t) A_{2q+1}$; further, if $m=2k+1$ then $\calD(k+1)$ has type $C_{q+1}$. 
\end{proof} 

To relate conditions (a), (b) above to the notation used in \cite{NPP25}, note that 
(a) holds if and only if $2qm \leq 2n \leq (2q+1)m$. 
So, if $m$ divides $2n$ we must be in case (a); further, if $m$ is odd,  
one has necessarily $2n = 2qm$, and if $m$ is even,  
we will set $a = 2n/m$ as in \cite{NPP25}, \S 6. Then, we can summarize our findings in the following:

\begin{prop} Let $R$ be of type $C_n$, with $n\geq 2$. Then $R(m)$ is of Levi type and its type is given by 
\eqref{Ccase n < m} if $m>n$ and by Lemma {\rm \ref{lem-Cn}} if $m\geq n$. 
The cases where $m$ divides the Coxeter number $h = 2n$ are: 
\begin{enumeratea}
\item $m = 2k+1$ and $n = q m$. Then $R(m)$ has type $k A_{2q-1} + C_{q}$. 

\smallskip 
\item $m = 2k$. Then $R(m)$ has type $k A_{a-1}$, where $a=2n/m=n/k$. 
\end{enumeratea} 
\end{prop}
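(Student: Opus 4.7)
The proposition is essentially a consolidation of the case analysis carried out in the preceding paragraphs, together with a specialization of Lemma~\ref{lem-Cn} to the divisibility condition $m \mid h = 2n$. My plan is to argue in three short steps.

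First, I would point out that Levi-typeness is already built into the explicit Dynkin-diagram construction given above: in the range $m \leq n$, the diagrams $\calD(1), \dots, \calD(k)$, linked by the roots $\eta_1, \dots, \eta_{k-1}$ and completed by the long-root leg coming from $-2\vep_{1+q(1)m}$ (\resp by the extra diagram $\calD(k+1)$ glued via $\eta_0$ when $m = 2k+1$), form a Dynkin diagram of type $C_n$ whose vertex set is a base of $R$ and hence $W$-conjugate to $\Delta$. The subset $R_m$ then corresponds to the simple roots obtained by deleting the $\eta_r$'s and the long-root vertex, exhibiting $R(m)$ as a Levi subsystem. In the range $m > n$ the small diagram $\calD$ together with its $C_{i+1}$- or $C_{i+2}$-tail plays the same role.

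Second, for the type of $R(m)$ I would simply invoke the formulas already established: \eqref{Ccase n < m} when $m > n$, and Lemma~\ref{lem-Cn} when $m \leq n$. No new argument is needed.

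Third, for the itemized cases, the divisibility condition $m \mid 2n$ forces a specific relation between $q$ and $t$ in the division $n = qm + t$. In case (a), $m = 2k+1$ odd, $\gcd(m,2) = 1$ gives $m \mid n$, \ie $t = 0$; since $2t = 0 \leq m$, case (a) of the lemma yields the type $k A_{2q-1} + C_q$. In case (b), $m = 2k$ even, the condition reduces to $k \mid n$; setting $a = n/k = 2n/m$ and writing $n = 2qk + t$ with $0 \leq t < 2k$ forces $t = (a - 2q)k \in \{0, k\}$, so $a - 2q \in \{0, 1\}$. In both subcases $2t \leq m$, so case (a) of the lemma applies: $a = 2q$ gives $k A_{2q-1}$ and $a = 2q+1$ gives $k A_{2q}$, both of which equal $k A_{a-1}$. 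The only real subtlety is the boundary point $2t = m$ occurring when $a$ is odd: one must check that it is covered by case (a) of the lemma (which requires $2t \leq m$), not case (b) (which requires $2t \geq m+1$). Apart from this bookkeeping the proposition presents no serious obstacle, since it is a reorganization of facts already established.
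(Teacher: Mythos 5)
Your proposal is correct and follows essentially the same route as the paper: Levi-typeness comes from the explicit base of type $C_n$ containing $R_m$ constructed in the preceding discussion, and the itemized cases are obtained by observing that $m\mid 2n$ forces $t=0$ (for $m$ odd) or $t\in\{0,k\}$ (for $m=2k$), so that one is always in case (a) of Lemma \ref{lem-Cn}, with $2t=m$ correctly landing on the $2t\leq m$ side. Your bookkeeping with $a-2q\in\{0,1\}$ matches the paper's remark that condition (a) holds iff $2qm\leq 2n\leq (2q+1)m$.
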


\subsection{Types $B_n$ and $D_{n+1}$} With notation as in \eqref{simple-D} and \eqref{simple-B}, 
for $i < j$ one has $\mathrm{ht}(\vep_j - \vep_i) = j-i$ and $\mathrm{ht}(\vep_j + \vep_i) = j+i$, 
and $\mathrm{ht}(\vep_i) = i$ in type $B_n$.  

Firstly, one has the following lemma. 

\begin{lem}\label{lem-BD} Let $R$ be of type $B_n$ or $D_{n+1}$, with $n\geq 3$. Then $\Gamma(m) = R_m$, except when 
$m = 2k$ and $3k\leq n$. In this case, there is the additional simple root $\vep_k + \vep_{3k}$ of height $2m$. 
\end{lem}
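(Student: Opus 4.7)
The plan is a direct case analysis: a positive root $\beta$ of height $\ell m$ with $\ell \geq 2$ lies in $\Gamma(m) \setminus R_m$ precisely when it cannot be written as $\beta = \beta_1 + \beta_2$ with $\beta_1, \beta_2 \in R^+(m)$. The positive roots of $R$ fall into three families: $\vep_j - \vep_i$, $\vep_j + \vep_i$, and (only in type $B_n$) $\vep_j$. First I would dispose of the easy families. For $\beta = \vep_j - \vep_i$ of height $\ell m \geq 2m$, the decomposition $\beta = (\vep_j - \vep_{j-m}) + (\vep_{j-m} - \vep_i)$ splits $\beta$ into roots of heights $m$ and $(\ell - 1)m$. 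For $\beta = \vep_j$ in type $B_n$ of height $\ell m \geq 2m$, the decomposition $\beta = (\vep_j - \vep_m) + \vep_m$ does the job. So the only roots left to analyse are $\beta = \vep_j + \vep_i$ with $j + i = \ell m \geq 2m$.

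For such a $\beta$ I would enumerate the possible decompositions into two positive roots: (a) $\beta = (\vep_j - \vep_k) + (\vep_k + \vep_i)$ for some index $k \neq i$ with $0 \leq k < j$ (resp.\ $1 \leq k < j$ in type $B_n$); or (b) in type $B_n$ only, $\beta = \vep_j + \vep_i$ as the sum of two short roots. Since $j + i \equiv 0 \pmod m$, both summands of (a) lie in $R^+(m)$ iff $k \equiv j \pmod m$, and (b) applies iff both $i$ and $j$ are multiples of $m$.

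For $\ell \geq 3$, I would take $k = j - m$. The bounds $j > \ell m/2 \geq 3m/2$ and $m \geq 2$ force $k \geq 1$, so this $k$ is admissible unless $k = i$; the latter equality forces $i = (\ell - 1)m/2$, and then $k' = j - 2m = i - m$ is still $\geq 0$ (resp.\ $\geq 1$), except in the single subcase $B_n$ with $\ell = 3$, $i = m$, $j = 2m$, where both $i$ and $j$ are multiples of $m$ so decomposition (b) saves the day. Hence every $\beta$ of this shape with $\ell \geq 3$ lies outside $\Gamma(m)$.

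For $\ell = 2$, both summands must have height exactly $m$, so $\beta_1, \beta_2 \in R_m$. The candidate $k = j - m$ in (a) works unless $j = i + m$, which together with $j + i = 2m$ forces $(j, i) = (3m/2, m/2)$; this occurs only when $m = 2k$ is even, yielding the candidate exceptional root $\vep_{3k} + \vep_k$ of height $2m$, which is a root of $R$ iff $3k \leq n$. I would then verify by an exhaustive inspection of $\vep$-coefficients that no pair in $R_m \times R_m$ sums to this root: any attempt either produces the non-root $2\vep_k$, forces a negative index, or leaves a residual $\vep$ with the wrong sign; decomposition (b) is ruled out because $m \nmid 3k$. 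The main obstacle is precisely this last enumeration: the bulk of the argument is arithmetic on heights, but confirming that $\vep_{3k} + \vep_k$ has truly no decomposition requires systematically eliminating every candidate pair $(\beta_1, \beta_2) \in R_m \times R_m$ according to the signs of the two $\vep$-components.
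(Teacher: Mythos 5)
Your proposal is correct and follows essentially the same route as the paper: both arguments work in the explicit $\vep$-coordinates, peel off a summand $\vep_j - \vep_{j-m}$ of height $m$, and identify the unique obstruction $2\vep_k$ arising when $j-m=i$, which pins down $\vep_k+\vep_{3k}$ with $m=2k$. The paper packages this as a minimal-counterexample argument about sums of elements of $R_m$ while you decompose each root directly into two elements of $R^+(m)$, but the computations coincide (and your closing enumeration ruling out decompositions of $\vep_k+\vep_{3k}$ is the short check the paper leaves implicit).
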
 

\begin{proof} Assume that $\delta \in R^+(m)$ is not a sum of  
elements of $R_m$ and is minimal for this property. 
Then not all coefficients of $\delta$ are $1$, hence $\delta$ is of the form 
$\delta = \vep_k + \vep_{k+r}$ 
with $r\geq 1$ and $k+r \leq n$, and $\mathrm{ht}(\delta) = 2k+r$ is a multiple of $m$ stritctly greater than $m$. 
Thus, one has  $2(k+r) > 2k+r \geq 2m$  
and hence $k+r - m > 0$. 

So, if we substract the element $\vep_{k+r} - \vep_{k+r-m}$ of $R_m$ we are left with 
$\vep_k + \vep_{k+r-m}$, which is a root unless $k+r - m = k$, \ie $r = m$. 

Therefore, one has $r = m$, whence $k>0$, and $2k$ is a non-zero multiple $q m$ of $m$. Suppose that $q\geq 2$, 
then $k\geq m$ and hence, using the convention that $\vep_0 = 0$ if we are in type $B_n$ and $k=m$, one obtains that 
$$
\gamma = \delta  - (\vep_{k+m} - \vep_k) - (\vep_k - \vep_{k-m}) = \vep_k + \vep_{k-m} 
$$
is a root of height $(q-1)m$ and $\delta$ is the sum of $\gamma$ and two  
elements of $R_m$, contradicting the minimality of $\delta$. 
This contradiction shows that $q=1$, hence $2k = m$ and $\delta = \vep_k + \vep_{3k}$. 

Conversely, if $3k \leq n$ and $m = 2k$, the root $\delta_{4k}$ of $D_{n+1}$ (\resp $\delta'_{4k}$ of $B_{n}$) given by 
\eqref{def-gamma_k}
has height $2m$ and cannot be written as a sum of two roots of height $m$. This proves the lemma. 
\end{proof}

Set $k = \lfloor m/2\rfloor$, so that $m = 2k$ or $m=2k+1$. Again, for the sake of exposition, assume first that $m\leq n$. 

\begin{nota}\label{BD-def-bar} (a) In type $B_n$, we set $\bar{r} = m-r$ for $r = 1,\dots, m-1$. Then $m$ stands alone, \ie 
$\overline{m}$ is not defined. 

\smallskip 
(b) In type $D_{n+1}$, we set $\bar{r} = m-r$ for $r = 0,\dots, m$. 
\end{nota} 

Then $R_m$ consists of the roots:
 \begin{equation}\label{BD-R_m} 
 \begin{array}{l} 
 \alpha_{i,b} = \vep_{i+bm} - \vep_{i+(b-1) m} , \;  \text{for $1\leq  i < m$ and  } b = 1,\dots, q(i); 
 \\[3pt] 
 \beta_{r} = \vep_r + \vep_{\bar{r}}, \qquad \qquad \quad \,  \text{for $r$ such that } r < \bar{r} ; 
  \\[3pt] 
 \begin{cases}
  \vep_m \text{ and $\alpha_{m,b}$, for } b=1,\dots, q(m), & \text{ in type } B_n;  
  \\
  \alpha_{0,b} = \vep_{bm} - \vep_{(b-1) m} , \text{ for } b = 1,\dots, q(0), & \text{ in type } D_{n+1};  
 \end{cases} 
 \end{array} 
 \end{equation} 
For each $r = 1,\dots, k$ such that $r\not= \bar{r}$, one obtains the following Dynkin diagram $\calD(r)$: 
\begin{center} 
\begin{tikzpicture}
\coordinate (1) at (0,1) node at (1) {\small $\alpha_{r,q(r)}$ };
\draw (.55,1) -- (2.1,1);
\coordinate (3) at (2.5,1) node at (3) {\small \dots };
\draw (2.9,1) -- (4.2,1);
\coordinate (5) at (5.35,1) node at (5) {\small $\vep_{2m+r} {-} \vep_{m+r}$};
\draw (6.45,1) -- (7.65,1);
\coordinate (7) at (8.45,1) node at (7) {\small $\vep_{m+r} {-} \vep_r$};
\draw (8.45,.8) -- (4.6,.1);
\coordinate (0) at (4.1,0) node at (0) {\small $\vep_{r} {+} \vep_{\bar{r}}$};
\draw (3.7,-.15) -- (0,-.8);
\coordinate (2) at (0,-1) node at (2) {\small $\vep_{ m+\bar{r} } {-} \vep_{ \bar{r} }$};
\draw (.8,-1) -- (1.9,-1);
\coordinate (4) at (3,-1) node at (4) {\small $\vep_{ 2m+\bar{r} } {-} \vep_{ m+\bar{r} }$};
\draw (4.1,-1) -- (5.6,-1);
\coordinate (6) at (6,-1) node at (6) {\small \dots};
\draw (6.4,-1) -- (7.9,-1);
\coordinate (8) at (8.45,-1) node at (8) {\small $\alpha_{\bar{r} ,q(\bar{r})}$};
\end{tikzpicture} 
\end{center} 
which is of type $A_{d(r)}$ with $d(r) = q(r)+1 + q(\bar{r})$. 

\smallskip 
If $m =2k{+}1$ then $k < \bar{k}$ and we obtain the previous diagrams for $r = 1,\dots, k$. 
If $m=2k$ then $k = \bar{k}$, we obtain the previous diagrams only for $r = 1,\dots, k{-}1$, 
and the diagram $\mathcal{D}(k)$ consisting of 
the roots $\alpha_{k,b}$ for $b=1,\dots, q(k)$ has type $A_{q(k)}$.

\smallskip 
Moreover, there is a diagram $\calD(0)$ which is as follows: 
\begin{enumeratea}
\item In type $B_n$, $\calD(0)$ is the diagram below: 
\begin{center} 
\begin{tikzpicture}
\coordinate (0) at (.2,1) node at (0) {\small $\vep_m$};
\draw (.5,1.05) -- (1.8,1.05);
\draw (.5,.95) -- (1.8,.95);
\coordinate (2) at (1.15,1) node at (2) {$<$}; 
\coordinate (1) at (2.6,1) node at (1) {\small $\vep_{2m} {-} \vep_{m}$};
\draw (3.4,1) -- (5,1);
\coordinate (5) at (5.4,1) node at (5) {\small \dots};
\draw (5.7,1) -- (7.3,1);
\coordinate (7) at (8,1) node at (7) {\small $\alpha_{m,q(m)}$};
\end{tikzpicture} 
\end{center} 
which is of type $B_{1+q(m)}$. (If $q(m)=0$, \ie if $m> n/2$, it consists of $\vep_m$ only and has type $B_1$.) 

\smallskip 
\item In type $D_{n+1}$, one has $\bar{0} = m$, the roots $\alpha_{m,b}$ are among the roots $\alpha_{0,b}$, and one obtains the 
diagram $\mathcal{D}(0)$ below: 
\begin{center} 
\begin{tikzpicture}
\coordinate (1) at (.2,1) node at (1) {\small $\vep_{m} {-} \vep_0$};
\draw (.8,1) -- (2.2,1);
\coordinate (3) at (3,1) node at (3) {\small $\vep_{2m} {-} \vep_{m}$};
\draw (3.7,1) -- (5.3,1);
\coordinate (5) at (5.7,1) node at (5) {\small \dots};
\draw (6.1,1) -- (7.6,1);
\coordinate (7) at (8.2,1) node at (7) {\small $\alpha_{0,q(0)}$};
\draw (3,.8) -- (3,.2);
\coordinate (0) at (3,0) node at (0) {\small $\vep_{m} {+} \vep_0$};
\end{tikzpicture} 
\end{center} 
which is of type $D_{1+q(0)}$. (If $q(0) = 1$, \ie if $m >n/2$, it consists of $\vep_{m} {-} \vep_0$ and $\vep_{m} {+} \vep_0$ only
and has type $D_2$.)
\end{enumeratea}  

Next, for $r = 1,\dots, k-1$, set $\eta_r = -\vep_{\bar{r} + q(\bar{r}) m} - \vep_{r+1 + q(r+1)m}$. Then $\eta_r$ is orthogonal 
to all roots of $R_m$ except that it is linked by a simple bond to the right end-point of $\mathcal{D}(r)$, which is 
$\alpha_{\bar{r} ,q(\bar{r})}$ if $q(\bar{r})\geq 1$ and $\beta_r$ else, and to the left end-point of  
$\mathcal{D}(r+1)$, which is $\alpha_{r+1,q(r+1)}$  if $q(r+1)\geq 1$ and $\beta_{r+1}$ else.  
Moreover, the $\eta_r$'s are pairwise orthogonal. 

Further, set 
$$
\eta_0 = 
\begin{cases} 
-\vep_{m + q(m) m}  - \vep_{1+q(1)m} & \text{ in type } B_n; 
\\
-\vep_{q(0) m} - \vep_{1+q(1)m}& \text{ in type }  D_{n+1}. 
\end{cases}
$$
Then $\eta_0$ is orthogonal to all $\eta_r$'s for $r \geq 1$ and to all roots of $R_m$ except that it is linked by simple bonds to the 
left end-point of $\calD(1)$ and to the right end-point of $\calD(0)$, except if $\calD(0)$ is of type $B_1$ or $D_2$, in which case 
$\eta_0$ is linked by a double bond to $\vep_m$ or by simple bonds to both $\vep_{m} {-} \vep_0$ and $\vep_{m} {+} \vep_0$. 
Since we know that $\vert R_m \vert + k = n$ we obtain that the diagram below is of the same type as $R$, \ie $B_n$ or $D_{n+1}$: 
\begin{center} 
\begin{tikzpicture}
\coordinate (0) at (0,0) node at (0) {\small $\mathcal{D}(0)$};
\draw (.4,0) -- (1.8,0);
\coordinate (1) at (2,0) node at (1) {\small $\eta_0$};
\draw (2.2,0) -- (3.6,0);
\coordinate (2) at (4,0) node at (2) {\small \dots};
\draw (4.4,0) -- (5.6,0);
\coordinate (3) at (6,0) node at (3) {\small $\eta_{k-1}$};
\draw (6.4,0) -- (7.6,0);
\coordinate (4) at (8,0) node at (4) {\small $\mathcal{D}(k)$};
\end{tikzpicture} 
\end{center} 

This completes the description of the Dynkin diagram of $R_m$ and proves that  $R_m$ is of Levi type. Before proceeding further, 
let us indicate how the foregoing has to be modified when $n <m$, say $n = m -i$ with $i=1,\dots, \lfloor m/2 \rfloor$. 
In this case, $R_m$ consists of the roots $\vep_r + \vep_{m-r}$ for $r = i,\dots, k$ and $2r < m$, is of Levi type, and its type is: 
\begin{equation}\label{BDcase n < m}
\begin{cases}
(k-i) A_1 & \text{ if } m=2k; 
\\
(k-i+1) A_1 & \text{ if } m=2k+1. 
\end{cases}
\end{equation} 

Now, we come back to the case $m \leq n$. Combining the previous discussion with Lemma \ref{lem-BD}, one obtains the: 

\begin{cor}\label{cor-BD} Let $R$ be of type $B_n$ or $D_{n+1}$, with $n\geq 3$. 
The Dynkin diagram $\Gamma_m$ is the disjoint union of the diagrams $\calD(0), \dots, \calD(k)$ 
described above, except when $m=2k$ and $3k\leq n$. 

In this case, $\Gamma_m$ is the disjoint union of $\calD(0), \dots, \calD(k-1)$ and 
the following diagram $\mathcal{D}(k)^\dag$: 
\begin{center} 
\begin{tikzpicture}
\coordinate (0) at (-.5,0) node at (0) {\small $\alpha_{k,q(k)}$};
\draw (.1,0) -- (1.6,0);
\coordinate (1) at (2,0) node at (1) {\small \dots};
\draw (2.3,0) -- (3.8,0);
\coordinate (2) at (5,0) node at (2) {\small $\vep_{k+2m} {-} \vep_{k+m}$};
\draw (6.2,0) -- (7.8,0);
\coordinate (3) at (8.6,0) node at (3) {\small $\vep_{k+m} {-} \vep_{k}$};
\draw (5,-.2) -- (5,-1.2); 
\coordinate (4) at (5,-1.5) node at (4) {\small $\vep_{k+m} {+} \vep_{k}$};
\end{tikzpicture} 
\end{center} 
formed by $\calD(k)$ and the additional simple root $\vep_{m+k} + \vep_k$ of height $2m$, 
which has type $D_{1 + q(k)}$. Then $R(m)$ is not of Levi type and 
the constant $d_m$ of \cite{NPP25} is equal to $2^{q(k)}$. 
\end{cor}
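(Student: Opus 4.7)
My plan is to combine Lemma \ref{lem-BD} with the structural description of $R_m$ carried out just above it. In the non-exceptional case $\Gamma(m)=R_m$, and that description already exhibits $R_m$ as the disjoint union of $\calD(0),\dots,\calD(k)$; moreover, adjoining the connecting roots $\eta_r$ produces a base of the ambient $B_n$ or $D_{n+1}$, so $R_m$ (hence $R(m)$) is of Levi type. This disposes of the non-exceptional statement immediately.

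For the exceptional case $m=2k$, $3k\leq n$, Lemma \ref{lem-BD} gives $\Gamma(m)=R_m\cup\{\delta\}$ with $\delta:=\vep_{k+m}+\vep_k=\vep_k+\vep_{3k}$. The key step will be locating how $\delta$ attaches to the Dynkin diagram of $R_m$. Since $\delta$ is supported on the indices $k$ and $3k=k+m$, the inner product $(\delta,\cdot)$ vanishes on every root of every $\calD(r)$ with $r\neq k$ and on every $\alpha_{k,b}$ with $b\geq 3$; a short direct calculation gives $(\delta,\alpha_{k,1})=0$ and, when $q(k)\geq 2$, $(\delta,\alpha_{k,2})=-1$. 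Thus $\delta$ is attached as a new fork end at the vertex $\alpha_{k,2}$, producing the diagram $\calD(k)^\dag$ of type $D_{1+q(k)}$ drawn in the statement. When $3k\leq n<5k$ one has $q(k)=1$, $\delta$ is orthogonal to $\alpha_{k,1}$, and $\calD(k)^\dag \cong 2A_1 = D_2$, consistent with the general formula.

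For the dimension $d_m$, I will observe that every root in $\calD(k)^\dag$ is of the form $\vep_i\pm\vep_j$ (long in type $B_n$, automatic in type $D_{n+1}$), so the corresponding coroots are proportional and the dual component is again of type $D_{1+q(k)}$. The node $\delta$ sits at a fork end of this $D$-diagram, so the fundamental weight of $H(m)$ attached to it is a half-spin weight; by the standard Weyl-dimension count the corresponding fundamental representation has dimension $2^{(1+q(k))-1} = 2^{q(k)}$. The case $q(k)=1$ specializes correctly to the natural two-dimensional $A_1$-representation.

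To prove that $R(m)$ is not of Levi type in the exceptional case, I will invoke the saturation criterion: a closed subsystem $R'\subseteq R$ is of Levi type if and only if $R'=R\cap\mathbb{Q}R'$. From $\delta,\alpha_{k,1}\in R(m)$ one gets $\vep_k=\tfrac{1}{2}(\delta-\alpha_{k,1})\in\mathbb{Q}R(m)$. In type $B_n$, $\vep_k$ is itself a root of $R$ of height $k$, not a multiple of $m=2k$, so it lies in $R\cap\mathbb{Q}R(m)\setminus R(m)$. In type $D_{n+1}$, the roots $\vep_m\pm\vep_0$ in $\calD(0)$ put $\vep_0\in\mathbb{Q}R(m)$, whence $\vep_k-\vep_0\in R$ is a root of height $k$ in $\mathbb{Q}R(m)\setminus R(m)$. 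Either way the criterion fails. The main obstacle here is really the inner-product bookkeeping that identifies the attachment point of $\delta$; once that is in hand, the remaining steps reduce to standard Dynkin-diagram reading and a dimension count.
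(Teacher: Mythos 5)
Your proof is correct, and on the two main points---identifying the attachment of $\delta_{2m}=\vep_k+\vep_{3k}$ to $\calD(k)$ and reading off $d_m$ as a half-spin dimension---it coincides with the paper's argument: the paper likewise observes that $\delta_{2m}$ is orthogonal to all of $R_m$ except $\vep_{k+2m}-\vep_{k+m}$ (your computation $(\delta,\alpha_{k,1})=0$, $(\delta,\alpha_{k,2})=-1$ is exactly this), concludes that $\calD(k)^\dag$ has type $D_{1+q(k)}$ with $\delta_{2m}$ at a spin node, and gets $d_m=2^{q(k)}$. Where you genuinely diverge is the claim that $R(m)$ is not of Levi type: the paper simply cites Dynkin's classification of closed subsystems up to $W$-conjugacy, whereas you give a direct, self-contained argument via the saturation criterion ($R'$ is of Levi type iff $R'=R\cap\mathbb{Q}R'$), exhibiting $\vep_k\in\mathbb{Q}R(m)$ from $\tfrac12(\delta-\alpha_{k,1})$ and then a root of height $k$ (namely $\vep_k$ in type $B_n$, or $\vep_k-\vep_0$ in type $D_{n+1}$ after extracting $\vep_0$ from $\calD(0)$) lying in $R\cap\mathbb{Q}R(m)$ but not in $R(m)$. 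This is a nice improvement in self-containedness: it replaces a table lookup by a two-line computation, at the mild cost of invoking the (standard, but unproved here) equivalence between Levi subsystems and saturated closed subsystems. Your extra remark that all roots of $\calD(k)^\dag$ have the same length, so that the dual component is again of type $D_{1+q(k)}$, addresses a point the paper leaves implicit and is worth keeping.
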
 

\begin{proof} The previous discussion completely describes $R_m$ so, by Lemma  \ref{lem-BD}, we may assume that 
$m = 2k$ and $3k \leq n$. Then 
the additional simple root $\delta_{2m} = \vep_{m+k} + \vep_k$ is orthogonal to all roots of $R_m$ except 
$\vep_{k+2m} {-} \vep_{k+m}$ and, together with $\calD(k)$ which has type $A_{q(k)}$, it forms the diagram 
$\calD(k)^\dag$ shown above, which has type $D_{1+q(k)}$. 
(It has type $D_2$ if $q(k) = 1$, \ie if $n < k +2m$.)

One knows in this case that $R(m)$ is not of Levi type by the classification up to 
$W$-conjugacy of closed root subsystems of $R$, see \eg \cite{Dyn52}. Finally, since $\delta_{2m}$ is one of 
the spin nodes of a diagram of type $D_{1+q(k)}$, the dimension $d_m$ of \cite{NPP25}
is the dimension of a spin representation in type $D_{1 + q(k)}$, hence $d_m = 2^{q(k)}$. 
\end{proof}

\smallskip 
Now, let us make explicit the $q(i)$'s and the types of $R_m$ and $\Gamma(m)$ when $m\leq n$. 
One has the following:

\begin{lem}\label{lem-BD-q} Write $n = qm+ t$, with $q\in \NN^*$ and $0\leq t < m$. Then: 
\begin{enumeratei}
\item $\calD(0)$ has type $B_q$, \resp $D_{q+1}$, if $R$ has type $B_n$, \resp $D_{n+1}$. 

\smallskip
\item Assume that $m=2k+1$. Then $\calD(1) \cup \cdots \calD(k)$ has type: 
$$
\begin{cases} 
i A_{2q+1} + (k-i) A_{2q} & \text{ if $t=k+i$ with } i=1,\dots, k; 
\\
t A_{2q} + (k-t) A_{2q-1} & \text{ if } t=0,\dots, k.
\end{cases}
$$

\smallskip 
\item Assume that $m=2k$. Then $\calD' = \calD(1) \cup \cdots \calD(k-1)$ and $\calD(k)$ have type: 
$$
\begin{array}{|c|c|c|}
\hline 
 & \calD' & \calD(k) 
\\
\hline 
\begin{array}{c}
t = k+i \\
i = 0,\dots, k{-}1 
\end{array} & (k{-1-}i) A_{2q} + i A_{2q+1} & A_q 
\\
\hline 
t = 0,\dots, k{-}1 & t A_{2q} + (k{-1-} t) A_{2q-1} & A_{q-1} \rule{0pt}{12pt} 
\\[2pt] 
\hline 
\end{array}
$$
\end{enumeratei}
\end{lem}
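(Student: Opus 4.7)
The plan is to compute the integers $q(i)$ directly from $n = qm+t$ and then read off the types of the various components using the descriptions of $\calD(0), \calD(1),\dots,\calD(k)$ established in the preceding paragraphs.

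First I would dispose of (i). By definition $q(i) = \lfloor (n-i)/m\rfloor$. In type $B_n$ we have $q(m) = \lfloor((q-1)m + t)/m\rfloor = q-1$ since $0\leq t<m$, so $\calD(0)$ has type $B_{1+q(m)} = B_q$. In type $D_{n+1}$ we have $q(0) = \lfloor(qm+t)/m\rfloor = q$, so $\calD(0)$ has type $D_{1+q(0)} = D_{q+1}$.

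For (ii) and the $\calD'$ part of (iii), the key observation is that each $\calD(r)$ with $r<\bar r$ has type $A_{d(r)}$ with $d(r) = q(r)+1+q(\bar r)$, and from $n = qm+t$ one reads off
\[
q(r) = \begin{cases} q & \text{if } r\leq t,\\ q-1 & \text{if } r>t,\end{cases}
\qquad
q(\bar r) = q(m-r) = \begin{cases} q & \text{if } t+r\geq m,\\ q-1 & \text{if } t+r<m.\end{cases}
\]
I would then split on whether $t\leq k$ or $t=k+i$ with $i\geq 1$. When $t\leq k$, the condition $t+r\geq m$ fails for all $r\leq k$ (since $t+r\leq 2k\leq m$), so $q(\bar r) = q-1$ throughout; the $r\leq t$'s contribute $A_{2q}$ and the $r>t$'s contribute $A_{2q-1}$. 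When $t = k+i$ with $i\geq 1$, every $r$ in the range satisfies $r\leq t$, so $q(r)=q$; the condition $t+r\geq m$ becomes $r\geq k-i+1$ (in case (ii)) or $r\geq k-i$ (in case (iii), since $m=2k$), yielding the claimed $i$ copies of $A_{2q+1}$ and $k-i$ (resp.\ $k-1-i$) copies of $A_{2q}$. One has to be slightly careful in case (iii) to remember that $r$ only runs from $1$ to $k-1$ (since $\bar k = k$), which is exactly what produces the shifted count $k-1-i$ rather than $k-i$.

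Finally, the $\calD(k)$ entry in case (iii) is immediate: $\calD(k)$ has type $A_{q(k)}$, and
$q(k) = q$ if $t\geq k$, i.e.\ if $t = k+i$ with $0\leq i\leq k-1$, and $q(k) = q-1$ if $t<k$. The proof amounts to organizing these case distinctions cleanly; there is no real obstacle, only bookkeeping, and the mildly tricky point is keeping track of the endpoint $r=k$ (which behaves differently depending on the parity of $m$) and of the fact that $\calD'$ omits $\calD(k)$ when $m=2k$.
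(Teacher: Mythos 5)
Your proposal is correct and follows essentially the same route as the paper's proof: compute $q(i)=\lfloor (n-i)/m\rfloor$ from $n=qm+t$, use $d(r)=q(r)+1+q(\bar r)$, and split into the cases $t\leq k$ versus $t=k+i$, keeping track of the endpoint $r=k$. The only minor caveat is the boundary $t=k$ when $m=2k$ (which the table files under $i=0$), but as you note both bookkeepings give the same answer there.
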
  

\begin{proof} One has $n = qm+ t$ with $0\leq t< m$. Thus $q(0) = q$ and $q(m) = q-1$, and (i) follows. 
Now, recall that for $r = 1,\dots, \lfloor (m-1)/2\rfloor$ we set $\bar{r} = m-r$. 

Assume first that $m = 2k+1$. 
If $t \geq k+1$, say $t = k+i$ with $i = 1,\dots, k$, then all $q(r)$ equal $q$, whereas 
$q(\bar{r})= q$ for $k{-}i  < r \leq k$ and 
$q(\bar{r}) = q-1$ for $1\leq r < m - t = k +1-i$. This gives 
$i$ copies of $A_{2q+1}$ and $k-i$ copies of $A_{2q}$. 

\smallskip 
  If $t = 0,\dots, k$, then all $q(\bar{r})$ equal $q-1$, whereas $q(r) = q$ for $1\leq r \leq t$ and 
$q(r) = q{-}1$ for $t < r \leq k$. This gives $t$ copies of $A_{2q}$ and $k{-}t$ copies of $A_{2q-1}$.

\smallskip Assume now that $m = 2k$. 
If $t \geq k$, say $t = k+i$ with $i = 0,\dots, k-1$, then all $q(r)$ equal $q$, whereas 
$q(\bar{r})= q$ for $k{-}i  \leq r \leq k-1$ and 
$q(\bar{r}) = q-1$ for $1\leq r < m - t = k-i$. So $q(k) = q$ and 
we obtain that $\calD'$ is the sum of $i$ copies of $A_{2q+1}$ and $k{-1-}i$ copies of $A_{2q}$. 

\smallskip  If $t = 0,\dots, k-1$, then all $q(\bar{r})$ equal $q{-}1$, whereas $q(r) = q$ for $1\leq r \leq t$ and 
$q(r) = q-1$ for $t < r \leq k$. So $q(k) = q{-}1$ and we obtain that $\calD'$ is the sum of $t$ copies of $A_{2q}$ and $k{-1-}t$ copies of $A_{2q-1}$. 
 \end{proof} 

We summarize our findings in the following  proposition,
where we assume that $m\leq n$ since 
the case $n < m$ was treated in \eqref{BDcase n < m} above. 
Note that when $m=2k$, the condition $m\leq n < 3k$ corresponds to $n = m + t$ with $t = 0,\dots, k-1$.

\begin{prop}\label{prop-BD} Let $R$ be of type $B_n$ or $D_{n+1}$, with $n \geq 3$. Assume $m\leq n$, 
write $n = qm + t$ with $t = 0,\dots, m-1$, and set $k = \lfloor m/2\rfloor$. Denote by $BD_q$ a diagram of 
type $B_q$ if $R = B_n$ and of type $D_{q+1}$ if $R = D_{n+1}$. 
\begin{enumeratea} 
\item If $m = 2k+1$, then $R(m)$ is of Levi type and has type 
$$
BD_q + \begin{cases} 
i A_{2q+1} + (k-i) A_{2q} & \text{ if $t=k+i$ with } i=1,\dots, k; 
\\
t A_{2q} + (k-t) A_{2q-1} & \text{ if } t=0,\dots, k.
\end{cases}
$$

\item If $m = 2k$ and $n = m + t$ with $t = 0,\dots, k-1$ then $R(m)$ has type 
$BD_1 + t A_{2} + (k{-1-}t) A_{1}$ and is of Levi type. 

\smallskip 
\item If $m = 2k$ and $n\geq 3k$, then $R(m)$ is not of Levi type. 
With the notation of\, {\rm \Th \ref{main-thm}},  one has the following table: 
$$
\begin{array}{|c|c|c|c|} 
\hline 
 & \Gamma(m) & X^0 \rule{0pt}{11pt} & d_m 
\\
\hline 
\begin{array}{c}
t = k+i \\
i = 0,\dots, k{-}1 
\end{array}  & BD_q +(k{-1-}i) A_{2q} + i A_{2q+1}  + D_{q+1}^\dag & A_q & 2^q 
\\
\hline 
\begin{array}{c}
q \geq 2 \\
t = 0,\dots, k{-}1 
\end{array} & BD_q + t A_{2q} + (k{-1-} t) A_{2q-1} + D_{q}^\dag & A_{q-1} & 2^{q-1} 
\\ 
\hline 
\end{array} 
$$
\end{enumeratea}  
\end{prop}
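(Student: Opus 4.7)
\emph{Proof plan.} The proposition is a compilation of what has already been established: Lemma~\ref{lem-BD} characterises when $\Gamma(m) \supsetneq R_m$; the explicit construction preceding Corollary~\ref{cor-BD} exhibits the Dynkin diagram of $R_m$ as the disjoint union $\calD(0) \sqcup \calD(1) \sqcup \cdots \sqcup \calD(k)$ and in particular shows that $R_m$ is of Levi type; Corollary~\ref{cor-BD} handles the exceptional case $m=2k$, $3k \leq n$, together with the formula $d_m = 2^{q(k)}$; and Lemma~\ref{lem-BD-q} gives the types of the individual components in terms of $q$ and $t$. My plan is simply to assemble these ingredients case by case, so the only task is bookkeeping.

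For (a), since $m = 2k+1$ is odd, Lemma~\ref{lem-BD} gives $\Gamma(m) = R_m$, hence $R(m)$ is of Levi type. Adding the type $BD_q$ of $\calD(0)$ from Lemma~\ref{lem-BD-q}(i) to the type of $\calD(1) \sqcup \cdots \sqcup \calD(k)$ from Lemma~\ref{lem-BD-q}(ii) yields the two formulas announced, according as $t \geq k+1$ or $t \leq k$. For the Levi part of (b), the hypothesis $n = m + t$ with $0 \leq t < k$ forces $n < 3k$, so Lemma~\ref{lem-BD} again gives $\Gamma(m) = R_m$; the result follows by combining Lemma~\ref{lem-BD-q}(i) with $q = 1$ and the second row of Lemma~\ref{lem-BD-q}(iii), noting that $\calD(k)$ has type $A_{q-1} = A_0 = \varnothing$ and thus disappears from the sum.

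For (c), Corollary~\ref{cor-BD} already asserts that $R(m)$ is not of Levi type, that in $\Gamma(m)$ the component $\calD(k)$ is replaced by the diagram $\calD(k)^\dag$ of type $D_{1+q(k)}$, and that $d_m = 2^{q(k)}$. It only remains to compute $q(k) = \lfloor (n-k)/m \rfloor$ in the two subcases of the table. If $t = k + i$ with $0 \leq i \leq k-1$, then $n - k = qm + i$ with $0 \leq i < k < m$, so $q(k) = q$, giving the component $D_{q+1}^\dag$ and $d_m = 2^q$. If instead $0 \leq t \leq k-1$ with $q \geq 2$, then $n - k = (q-1)m + (k+t)$ with $0 < k + t < m$, so $q(k) = q - 1$, giving $D_q^\dag$ and $d_m = 2^{q-1}$. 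Adding the type of $\calD(0)$ from Lemma~\ref{lem-BD-q}(i) and the type of $\calD' = \calD(1) \sqcup \cdots \sqcup \calD(k-1)$ from the first, \resp second, row of Lemma~\ref{lem-BD-q}(iii) completes the two lines of the table.

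There is no creative obstacle here. The only delicate point is keeping the indexing consistent between the two lemmas and the statement: in particular, verifying the simple arithmetic that determines $q(k)$ in part (c) and confirming that the corner case $q = 1$ in (b) and the case $q(k) = 1$ (\ie $n < k + 2m$) in (c), which produces a component $D_2^\dag$, are handled correctly by the same formulas.
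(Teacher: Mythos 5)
Your proposal is correct and matches the paper's approach exactly: the paper presents this proposition as a summary of the preceding discussion, and your assembly of Lemma~\ref{lem-BD}, the diagram construction, Corollary~\ref{cor-BD} and Lemma~\ref{lem-BD-q}, including the computation of $q(k)$ in the two subcases of (c), is precisely the intended argument.
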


\subsection{Types $E_6, E_7, E_8$}\label{subsec-E} 
We number the nodes of the Dynkin diagram as in \cite{Bou68}, Tables 
V--VII, \ie the nodes $1,2,4,\dots, 8$ are linearly ordered and the nodes $4$ and $2$ are connected. 
Using \S \ref{dual-part} we obtain the following tables for the $\pi_k$'s. 
$$
\begin{array}{|c|c|}  
\hline
\text{\small height} & E_6 
\\
\hline 
2-4 &  5
\\
\hline 
5 &   4 
\\
\hline
6-7 &   3
\\
\hline
8 &  2 
\\
\hline
9-11 &   1 
\\
\hline
\end{array} 
\qquad \qquad 
\begin{array}{|c|c|c|}
\hline 
\text{\small height}  & E_7 & E_8 
\\
\hline 
2-5 & 6 & 7 
\\
\hline 
6-7 & 5 & 7 
\\
\hline 
8-9 & 4 & 6
\\
\hline 
10-11 & 3 & 6
\\
\hline 
12-13 & 2 & 5
\\
\hline 
14-17 & 1 & 4
\\
\hline 
\end{array} 
\qquad \qquad 
\begin{array}{|c|c|}
\hline 
\text{\small height} & E_8 
\\
\hline 
18-19 & 3
\\
\hline 
20-23 & 2
\\
\hline 
24-29 & 1
\\
\hline 
\end{array}  
$$

For any connected subset $I = \{i,j,k,\dots\}$ of the Dynkin diagram, we denote by $\alpha_{i,j,k,\dots}$ the root with support 
$I$ whose coefficients are all $1$. We consider the roots $\gamma_4 = \alpha_{2,3,4,5}$ and 
$\gamma_8 = \alpha_{[2,8]} + \alpha_4$ (which correspond to roots in $D_4$ and $D_7$ respectively) and:  
$$
\delta_6 = \alpha_{[1,6]} \qquad\quad  \delta_{10} = \EEE8{1}{2}{2}{1}{1}{1}{1}{1} \qquad \quad \delta_{16} = 
\EEE8{1}{2}{3}{3}{3}{2}{1}{1} 
$$
Each of these root is of height $2m$, for some $m$, and not the sum of two roots of height $m$.

\smallskip 
With notation as in \Th \ref{main-thm}, and using the notation $[4A_1]''$ of \cite{Dyn52},  \S II.5.17, Table 11, 
 one has the: 

\begin{prop} Let $R$ be of type $E_6,E_7, E_8$ and let $h$ be its Coxeter number.

\smallskip 
{\rm (a)}  If $m\geq h/2$ then $R(m)$ 
has type $\eta_m A_1$ and is of Levi type. 

\smallskip 
{\rm (b)}  Else, $R(m)$, $X^\dag$ and $d_m$ are described by the following tables for $E_6, E_7$: 
$$
\begin{array}{|c|c|c|c|c|}
\hline 
 & \multicolumn{2}{c|}{E_6,\;  h=12} &  \multicolumn{2}{c|}{E_7, \; h=18}
 \\
 \hline 
m, \delta_{2m}  & \Gamma(m) & X^0, d_m & \Gamma(m) & X^0, d_m 
\\
\hline 
2 , \gamma_4  & A_5 + A_1^\dag & \varnothing, 2 & A_7^\dag &  A_6, 8 \rule{0pt}{12pt} 
\\
\hline
3 , \delta_6 & 2A_2 + A_2^\dag & A_1, 3 & A_5 + A_2^\dag &  A_1, 3 \rule{0pt}{12pt} 
\\
\hline 
4  & 2A_2 + A_1 & & A_4 + A_2 &  
\\ 
\hline 
5 & A_2 + 2A_1 &  & A_3 + A_2 + A_1 &  
\\
\hline 
6 & 3A_1   & & 2 A_2 + A_1 & 
\\
\hline 
7 & 3A_1  & & A_2 + 3 A_1 &   
\\
\hline 
8 & 2 A_1 &   & A_2 + 2 A_1 &  
\\
\hline 
9  &  & &   [4 A_1]'' &  \rule{0pt}{11pt}
\\
\hline 
\end{array}
$$
and for $E_8$, where $h=30$: 
$$
\begin{array}{|c|c|c|}
\hline 
m, \delta_{2m}  & \Gamma(m) & X^0, d_m 
\\
\hline 
2 , \gamma_4   &  D_8^\dag & D_7, 16 \rule{0pt}{12pt} 
\\
\hline
3 , \delta_6 &  A_8^\dag & A_7, 9  \rule{0pt}{12pt} 
\\
\hline 
4  & D_5 + A_3^\dag & A_2, 4 \rule{0pt}{12pt} 
\\
\hline 
5  &  A_4 + A_4^\dag & A_3, 5 \rule{0pt}{12pt} 
\\
\hline 
6 &   A_4 + A_3 & 
\\
\hline 
7 &   A_4 + A_2 + A_1 & 
\\
\hline 
8  &   A_3 + A_2 + A_1 + A_1^\dag & \varnothing, 2 \rule{0pt}{12pt} 
\\
\hline 
\end{array}
\quad 
\begin{array}{|c|c|}
\hline 
m & \Gamma(m) 
\\
\hline 
9 & A_3 + A_2 + A_1 
\\
\hline
10 & 2A_2 + 2A_1 
\\
\hline 
11 &2A_2 + 2A_1 
\\
\hline 
12 & A_2 + 3A_1  
\\
\hline 
13 &    A_2 + 3A_1  
\\
\hline 
14 &    A_2 + 2A_1   
\\
\hline 
\end{array}
$$

\noindent In these cases, $R(m)$ is not of Levi type exactly when there is a component $X^\dag$, that is, when $m=2,3$ or 
$R$ is of type $E_8$ and $m = 4,5,8$. 
\end{prop}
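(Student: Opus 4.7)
The plan is to proceed by case analysis, treating each exceptional type $E_n$ and each admissible $m$ in turn. The $\pi_k$ tables at the start of \S\ref{subsec-E} supply the cardinalities $\vert R_{jm}\vert = \pi_{jm}$, hence the a priori count $\vert R^+(m)\vert = \sum_{j\geq 1}\pi_{jm}$ against which any claimed type of $R(m)$ can be checked.

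For part (a), I would observe that $m\geq h/2$ forces the sum of any two elements of $R^+(m)$ to have height at least $2m\geq h$, strictly exceeding the maximal root height $h-1$. Hence $R^+(m)$ contains no root of height above $m$, so $\Gamma(m) = R^+(m) = R_m$. For distinct $\beta,\gamma\in R_m$, both $\beta-\gamma$ (by the remark in the introduction) and $\beta+\gamma$ (by the same height estimate) fail to be roots, so $\langle\beta,\gamma^\vee\rangle = 0$; thus $R_m$ consists of $\eta_m:=\pi_m$ mutually orthogonal roots, giving $R(m)\simeq\eta_m A_1$. The Levi property is verified case by case from the explicit list of height-$m$ roots, by producing a Weyl group element conjugating them into $\Delta$.

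For part (b), with $m<h/2$, the procedure for each pair $(E_n,m)$ runs as follows. First, list $R_m$ explicitly using the tables of \cite{Bou68}, yielding $\pi_m$ positive roots. Next, compute all sums $\beta+\gamma$ with $\beta,\gamma\in R_m$ that are roots; these exhaust the height-$2m$ roots that decompose into two height-$m$ roots. Compare this list with the full set of height-$2m$ positive roots (of cardinality $\pi_{2m}$). By the same minimality argument used in Lemma \ref{lem-BD}, any simple root of $R(m)$ lying outside $R_m$ must have height exactly $2m$ and must be indecomposable in this way; direct inspection confirms that such a root is unique when it exists and coincides with the $\delta_{2m}$ given in Theorem \ref{main-thm}. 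The Dynkin diagram of $\Gamma(m)$ is then read off from the Cartan integers among these simple roots, identifying the components $X^\dag$ and the tabulated types; the identity $\vert R^+(m)\vert = \sum_{j\geq 1}\pi_{jm}$ supplies a global consistency check against the claimed type of $R(m)$.

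The main obstacle is the volume of case-by-case verification for $E_7$ and especially $E_8$. Within each case, the most delicate sub-step is showing that $\delta_{2m}$, when present, is not a sum of two elements of $R_m$: this is typically exhibited by locating a simple root $\alpha_i$ where $\delta_{2m}$ has coefficient $2$ while every height-$m$ root has coefficient at most $1$ at $\alpha_i$. The non-Levi character of $R(m)$ in the flagged cases is then confirmed by comparison with the Borel--de Siebenthal / Dynkin classification of closed subsystems \cite{Dyn52}. Finally, the dimension $d_m$ follows from the observation that $\delta_{2m}$ sits at an end-point (hence a minuscule node) of $X^\dag$: the natural representation of $A_q$ has dimension $q+1$ and a half-spin representation of $D_{q+1}$ has dimension $2^q$, matching the tabulated values.
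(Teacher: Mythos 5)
Your proposal is correct and follows essentially the same route as the paper: an exhaustive case-by-case listing of $R_m$ from the tables of \cite{Bou68}, identification of the extra indecomposable root $\delta_{2m}$ when present, the cardinality comparison $\vert R^+(m)\vert=\sum_{j\geq 1}\pi_{jm}$ as the decisive check that the exhibited base generates all of $R(m)$, Dynkin's classification for the Levi-type assertions, and the minuscule end-node of $X^\dag$ for $d_m$. The only minor point is that your appeal to ``the same minimality argument as Lemma \ref{lem-BD}'' to bound the height of extra simple roots by $2m$ is type-specific and not justified here, but it is also not needed once the counting check is in place.
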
 

\begin{proof} For each value of $m < h/2$, we exhibit the Dynkin diagram of $R_m$, together with the additional root $\delta_{2m}$ if any, and check that the set of positive roots so obtained has cardinality $\vert R(m)\vert$. From the previous tables, one sees that these cardinalities are given by the following tables: 

$$
\begin{array}{|c|c|c|c|} 
\hline 
m & E_6 & E_7 & E_8 
\\
\hline 
2 & 16 & 28 & 56 
\\
\hline 
3 & 9 & 18 & 36 
\\
\hline 
4 & 7 & 13 & 26 
\\
\hline 
5 & 5 & 10 & 20 
\\
\hline 
\end{array}
\qquad \quad 
\begin{array}{|c|c|c|}
\hline 
m &  E_7 & E_8 
\\
\hline 
6 &   7 & 16 
\\
\hline 
7 &  6 & 14 
\\
\hline 
8 &  5 & 11 
\\
\hline
9 & 4 & 10 
\\
\hline 
\end{array}
\qquad \quad 
\begin{array}{|c|c|}
\hline 
m & E_8 
\\
\hline 
10 & 8 
\\
\hline 
11 & 8
\\
\hline 
12 & 6
\\
\hline 
13 & 6
\\
\hline 
14 & 5 
\\
\hline 
\end{array}
$$

Suppose that $m=2$. Clearly, the root $\gamma_4$ of height $4$ cannot be written as a sum of two elements of $R_2$. 
For $E_6$, the elements of $R_2$ form the Dynkin diagram: 
\begin{center} 
\begin{tikzpicture}
\coordinate (1) at (-4,0) node at (1) {$\alpha_{4,5}$};
\draw (-3.6,.05) -- (-2.4,.05);
\coordinate (2) at (-2,0) node at (2) {$\alpha_{1,3}$};
\draw (-1.6,.05) -- (-.4,.05);
\coordinate (3) at (0,0) node at (3) {$\alpha_{4,2}$};
\draw (.4,.05) -- (1.6,.05);
\coordinate (4) at (2,0) node at (4) {$\alpha_{5,6}$};
\draw (2.4,.05) -- (3.6,.05);
\coordinate (5) at (4,0) node at (5) {$\alpha_{3,4}$};
\end{tikzpicture} 
\end{center} 
\noindent 
and $\gamma_4$ is orthogonal to them. Thus $R_2 \cup \{\gamma_4\}$ is a base of a root system $A_5 + A_1^\dag$ which 
gives  $16 = \vert R(2) \vert$ positive roots. 

For $E_7$, the additional element $\alpha_{6,7}$ of $R_2$ is connected to $\alpha_{4,5}$ and also to $\gamma_4$, so we 
obtain a root sytem $A_7^\dag$, which contains $28 = \vert R(2) \vert$ positive roots. 
Next, for $E_8$ the additional element $\alpha_{7,8}$ of $R_2$ is connected to $\alpha_{5,6}$; together with $\gamma_4$, this 
gives a root sytem $D_8^\dag$, whence $56 = \vert R(2) \vert$ positive roots. 

\smallskip Suppose that $m=3$. Clearly, the root $\delta_6$ of height $6$ cannot be written as a sum of two elements of $R_3$. 
For $E_6$, the elements of $R_3$ form the  Dynkin diagrams: 
\begin{center}   
\begin{tikzpicture}
\coordinate (1) at (-4,0) node at (1) {$\alpha_{4,5,6}$};
\draw (-3.55,.05) -- (-2.45,.05);
\coordinate (2) at (-2,0) node at (2) {$\alpha_{3,4,2}$};
\coordinate (3) at (0,0) node at (3) {$\alpha_{1,3,4}$};
\draw (.45,.05) -- (1.55,.05);
\coordinate (4) at (2,0) node at (4) {$\alpha_{2,4,5}$};
\coordinate (5) at (4,0) node at (5) {$\alpha_{3,4,5}$};
\end{tikzpicture} 
\end{center} 
and $\delta_6$ is connected to $\alpha_{3,4,5}$. Thus one obtains $2A_2 + A_2^\dag$, which gives $9  = \vert R(3)\vert$ positive roots. 

For $E_7$, the additional element $\alpha_{5,6,7}$ of $R_3$ connects $\alpha_{3,4,2}$ and $\alpha_{1,3,4}$, thus one 
obtains $A_5 + A_2^\dag$, which gives $18  = \vert R(3)\vert$ positive roots. Next, for $E_8$ the additional element 
$\alpha_{6,7,8}$ of $R_3$ connects $\alpha_{2,4,5}$ and $\alpha_{3,4,5}$, thus one 
obtains $A_8^\dag$, which gives $36  = \vert R(3)\vert$ positive roots. 

\smallskip Suppose that $m=4$. For $E_6$, the elements of $R_4$ form the Dynkin diagrams: 
\begin{center}   
\begin{tikzpicture}
\coordinate (1) at (-4,0) node at (1) {$\alpha_{3,4,5,6}$};
\draw (-3.45,.05) -- (-2.55,.05);
\coordinate (2) at (-2,0) node at (2) {$\alpha_{1,3,4,2}$};
\coordinate (3) at (0,0) node at (3) {$\alpha_{2,3,4,5}$};
\coordinate (4) at (2,0) node at (4) {$\alpha_{2,4,5,6}$};
\draw (2.55,.05) -- (3.45,.05);
\coordinate (5) at (4,0) node at (5) {$\alpha_{1,3,4,5}$};
\end{tikzpicture} 
\end{center} 
This gives $2A_2 + A_1$, whence $7 = \vert R(4)\vert$ positive roots. For $E_7$, the root $\alpha_{4,5,6,7}$ connects 
$\alpha_{1,3,4,2}$ and $\alpha_{2,3,4,5}$ and this gives $A_4 + A_2$, whence $13 = \vert R(4)\vert$ positive roots.
Finally, for $E_8$  the root $\alpha_{4,5,6,7}$ is connected to $\alpha_{1,3,4,2}$ and the root $\gamma_8$ is connected 
to $\alpha_{1,3,4,5}$. Thus one obtains $D_5 + A_3^\dag$, which gives $26 = \vert R(4)\vert$ positive roots. 

\smallskip Suppose that $m=5$. For $E_6$, the elements of $R_5$ form the Dynkin diagrams: 
\begin{center}   
\begin{tikzpicture}
\coordinate (1) at (-5,0) node at (1) {$\alpha_{1,3,4,5,6}$};
\draw (-4.35,.05) -- (-3.1,.05);
\coordinate (2) at (-2.5,0) node at (2) {$\gamma_4 + \alpha_4$};
\coordinate (3) at (0,0) node at (3) {$\alpha_{[2,6]}$};
\coordinate (4) at (2.5,0) node at (4) {$\alpha_{[1,5]}$};
\end{tikzpicture} 
\end{center} 
This gives $A_2 + 2A_1$, whence $5 = \vert R(5)\vert$ positive roots. For $E_7$, the root $\alpha_{2,4,5,6,7}$ is connected 
to $\alpha_{1,3,4,5,6}$ whereas $\alpha_{[3,7]}$ is connected to $\alpha_{[1,5]}$. This gives $A_3 + A_2 + 2A_1$, whence 
$10 = \vert R(5)\vert$ positive roots. Finally, for $E_8$  the root $\alpha_{[4,8]}$ connects 
$\alpha_{[2,6]}$ and $\alpha_{[1,5]}$, whereas  $\delta_{10}$ is connected 
to $\alpha_{2,4,5,6,7}$. Thus one obtains $A_4 + A_4^\dag$, which gives $20 = \vert R(5)\vert$ positive roots. 
This completes the discussion for $E_6$. 

\smallskip Suppose that $m=6$. For $E_7$, the elements of $R_6$ form the Dynkin diagrams: 
\begin{center}   
\begin{tikzpicture}
\coordinate (1) at (-5,0) node at (1) {$\delta_6$};
\coordinate (2) at (-2.5,0) node at (2) {$\alpha_{[1,5]}+\alpha_4$};
\draw (-1.7,.05) -- (-.4,.05);
\coordinate (3) at (0,0) node at (3) {$\alpha_{[2,7]}$};
\coordinate (4) at (2.5,0) node at (4) {$\alpha_{[2,6]}+\alpha_4$};
\draw (3.3,.05) -- (4.4,.05);
\coordinate (5) at (5,0) node at (5) {$\alpha_{1\cup [3,7]}$};
\end{tikzpicture} 
\end{center} 
For $E_8$  the root $\alpha_{[3,8]}$ connects $\delta_6$ and $\alpha_{[1,5]}+\alpha_4$, 
whereas  $\alpha_{2\cup [4,8]}$ is connected to $\alpha_{1\cup [3,7]}$. 
 Thus one obtains $A_4 + A_3$, which gives $16 = \vert R(6)\vert$ positive roots. 
 
\smallskip Suppose that $m=7$. Note that $\alpha,\beta \in R_7$ are connected if and only if $\alpha+\beta$ is a root, so by looking at the list of roots of height $14$ (\cite{Bou68}, Tables VI--VII), we obtain 
that for $E_8$ the elements of $R_7$ form the following Dynkin diagrams: 
\begin{center}   
\begin{tikzpicture}
\coordinate (1) at (-6,1.2) node at (1) {$\EEE8{1}{2}{2}{1}{0}{0}{0}{1}$};
\coordinate (3) at (-3,1.2) node at (3) {$\EEE8{1}{1}{2}{1}{1}{0}{0}{1}$};
\draw (-2.15,1.3) -- (-.85,1.3);
\coordinate (5) at (0,1.2) node at (5) {$\EEE8{0}{1}{1}{1}{1}{1}{1}{1}$};
\coordinate (2) at (-6,0) node at (2) {$\EEE8{0}{1}{2}{1}{1}{1}{0}{1}$};
\draw (-5.15,.1) -- (-3.85,.1);
\coordinate (4) at (-3,0) node at (4) {$\EEE8{0}{1}{1}{1}{1}{1}{1}{0}$};
\draw (-2.15,.1) -- (-.85,.1);
\coordinate (6) at (0,0) node at (6) {$\EEE8{0}{1}{2}{2}{1}{0}{0}{1}$};
\draw (.85,.1) -- (2.15,.1);
\coordinate (8) at (3,0) node at (8) {$\EEE8{1}{1}{1}{1}{1}{1}{0}{1}$};
\end{tikzpicture} 
\end{center} 
This is $A_4 + A_2 + A_1$, whence $14 = \vert R(7)\vert$ positive roots. And the five roots belonging to $E_7$ give 
$A_2 + 3A_1$.  

\smallskip Suppose that $m=8$. For $E_7$, the elements of $R_6$ form the Dynkin diagrams:  
\begin{center}   
\begin{tikzpicture}
\coordinate (2) at (-6,0) node at (2) {$\EEE8{0}{1}{2}{2}{1}{1}{0}{1}$};
\draw (-5.15,.1) -- (-3.85,.1);
\coordinate (4) at (-3,0) node at (4) {$\EEE8{1}{2}{2}{1}{1}{0}{0}{1}$};
\coordinate (6) at (0,0) node at (6) {$\EEE8{1}{1}{2}{2}{1}{0}{0}{1}$};
\coordinate (8) at (3,0) node at (8) {$\EEE8{1}{1}{2}{1}{1}{1}{0}{1}$};
\end{tikzpicture} 
\end{center} 
This is $A_2 + 2A_1$, whence $5 = \vert R(8)\vert$ positive roots. For $E_8$, the elements $\alpha_{[1,8]}$ and 
$\alpha_{[2,8]}+\alpha_4$ are connected  to the first, \resp third, root in the above diagram, and the additional root 
$\delta_{16}$ is orthogonal to all of them. This gives $A_3 + A_2 + A_1 + A_1^\dag$, whence $11 = \vert R(8)\vert$
positive roots.

\smallskip Suppose that $m=9$ and 
$R$ has type $E_8$. Again, $\alpha,\beta \in R_7$ are connected if and only if $\alpha+\beta$ is a root, so by looking at the list of roots of height $18$ in \cite{Bou68}, Table VII, we obtain 
that the elements of $R_9$ form the following Dynkin diagrams: 
\begin{center}   
\begin{tikzpicture}
\coordinate (1) at (-6,1.2) node at (1) {$\EEE8{1}{1}{2}{2}{1}{1}{0}{1}$};
\coordinate (3) at (-3,1.2) node at (3) {$\EEE8{1}{2}{2}{1}{1}{1}{0}{1}$};
\draw (-2.15,1.3) -- (-.85,1.3);
\coordinate (5) at (0,1.2) node at (5) {$\EEE8{0}{1}{2}{2}{1}{1}{1}{1}$};
\coordinate (2) at (-6,0) node at (2) {$\EEE8{1}{2}{2}{2}{1}{0}{0}{1}$};
\draw (-5.15,.1) -- (-3.85,.1);
\coordinate (4) at (-3,0) node at (4) {$\EEE8{1}{1}{2}{1}{1}{1}{1}{1}$};
\draw (-2.15,.1) -- (-.85,.1);
\coordinate (6) at (0,0) node at (6) {$\EEE8{0}{1}{2}{2}{2}{1}{0}{1}$};
\end{tikzpicture} 
\end{center} 
This is $A_3 + A_2 + A_1$, whence $10 = \vert R(9)\vert$ positive roots, thus $R(9)$ has type  $A_3 + A_2 + A_1$ 
for $E_8$. 
Further, for $E_7$ the highest root is the fundamental weight $\omega_1$ and 
 the four elements of height $9$ which belong to $E_7$ fit into the following Dynkin diagram of type $E_7$: 
\begin{center}   
\begin{tikzpicture}
\coordinate(1) at (-.6,0) node at (1) {\small $-\alpha_{3,4}$}; 
\draw (-.1,0) -- (.8,0);
\coordinate(3) at (1.6,0) node at (3) {$\EE7{1}{2}{2}{1}{1}{1}{1}$};
\draw (2.4,0) -- (3.3,0);
\coordinate(4) at (3.7,0) node at (4) {\small $-\omega_1$};
\draw (4.1,0) -- (5.1,0);
\coordinate (5) at (5.9,0) node at (5) {$\EE7{1}{1}{2}{2}{1}{1}{1}$};
\draw (6.7,0) -- (7.6,0);
\coordinate(6) at (8.2,0) node at (6) {\small $-\alpha_{[5,7]}$}; 
\draw (8.8,0) -- (9.7,0);
\coordinate(7) at (10.5,0) node at (7) {$\EE7{0}{1}{2}{2}{2}{1}{1}$};
\draw (3.7,-.2) -- (3.7,-1);
\coordinate(2) at (3.8,-1.3) node at (2)  {$\EE7{1}{2}{2}{2}{1}{0}{1}$};
\end{tikzpicture} 
\end{center} 
This shows that $R(9)$ for $E_7$ is of Levi type: it is $W$-conjugate to the subsystem formed by the simple roots 
$\alpha_3, \alpha_2, \alpha_5, \alpha_7$, which is denoted $[4 A_1]''$ by Dynkin, see \cite{Dyn52},  \S II.5.17, Table 11. 
This completes the discussion for $E_7$, so from now on we assume that $R$ is of type $E_8$. 

\smallskip For $m = 10, 11$ (\resp $m=12,13,14$), since $\vert R(m)\vert - \vert R_m \vert$ equals $2$ (\resp $1$), 
it suffices to exhibit two pairs (\resp one pair) of elements of $R_m$ whose sum is a root. We indicate these pairs 
in the following table:  
\begin{center}   
\begin{tikzpicture}
\coordinate (0) at (-8,2.2) node at (0) {$m=10$}; 
\coordinate (2) at (-5.5,2.2) node at (2) {$\EEE8{0}{1}{2}{2}{2}{1}{1}{1}$};
\draw (-4.65,2.3) -- (-3.85,2.3);
\coordinate (4) at (-3,2.2) node at (4) {$\EEE8{1}{2}{2}{2}{1}{1}{0}{1}$};
\coordinate (6) at (0,2.2) node at (6) {$\EEE8{1}{1}{2}{2}{2}{1}{0}{1}$};
\draw (.85,2.3) -- (1.65,2.3);
\coordinate (8) at (2.5,2.2) node at (8) {$\EEE8{1}{2}{2}{1}{1}{1}{1}{1}$};
\coordinate (1) at (-8,1.1) node at (1) {$m=11$}; 
\coordinate (3) at (-5.5,1.1) node at (3) {$\EEE8{0}{1}{2}{2}{2}{2}{1}{1}$};
\draw (-4.65,1.2) -- (-3.85,1.2);
\coordinate (5) at (-3,1.1) node at (5) {$\EEE8{1}{2}{3}{2}{1}{0}{0}{2}$};
\coordinate (7) at (0,1.1) node at (7) {$\EEE8{1}{2}{3}{2}{1}{1}{0}{1}$};
\draw (.85,1.2) -- (1.65,1.2);
\coordinate (9) at (2.5,1.1) node at (9) {$\EEE8{1}{1}{2}{2}{2}{1}{1}{1}$};
\end{tikzpicture} 
\end{center} 
\begin{center}   
\begin{tikzpicture}
\coordinate (10) at (-8,0) node at (10) {$m=12$}; 
\coordinate (12) at (-5.5,0) node at (12) {$\EEE8{1}{2}{2}{2}{2}{1}{1}{1}$};
\draw (-4.65,.1) -- (-3.85,.1);
\coordinate (14) at (-3,0) node at (14) {$\EEE8{1}{2}{3}{2}{1}{1}{0}{2}$};
\coordinate (11) at (-8,-1.1) node at (11) {$m=13$}; 
\coordinate (13) at (-5.5,-1.1) node at (13) {$\EEE8{1}{2}{3}{2}{1}{1}{1}{2}$};
\draw (-4.65,-1) -- (-3.85,-1);
\coordinate (15) at (-3,-1.1) node at (15) {$\EEE8{1}{2}{3}{3}{2}{1}{0}{1}$};
\coordinate (16) at (-8,-2.2) node at (16) {$m=14$}; 
\coordinate (18) at (-5.5,-2.2) node at (18) {$\EEE8{1}{2}{3}{2}{2}{2}{1}{1}$};
\draw (-4.65,-2.1) -- (-3.85,-2.1);
\coordinate (20) at (-3,-2.2) node at (20) {$\EEE8{1}{2}{3}{3}{2}{1}{0}{2}$};
\end{tikzpicture} 
\end{center} 
This completes the proof of assertion (b). Then the assertions about Levi type follow from Dynkin 
classification, see \cite{Dyn52},  \S II.5.17, Table 11. 
\end{proof}

\subsection{Types $F_4$ and  $G_2$} In these types, we will denote by $A_k(s)$ a root system of type $A_k$ consisting of short roots. 

\smallskip For $G_2$, let the simple roots be $\beta$ (long) and $\alpha$ (short). 
The only non trivial case is $m=2$. Then $R^+(2) =\{\beta+\alpha, \beta+3\alpha\}$. 
Thus $R(2)$ is of type $A_1(s) + A_1^\dag$, not of Levi type, and one has $d_2 = 2$. 

\smallskip In type $F_4$, let the simple roots be $a, b$ (long) and $c, d$ (short), with  $b+c\in R$. 
The positive roots are given by the following table: 

$$
\begin{array}{|c|c|c|} 
\hline
\text{height} & \text{long roots} & \text{short roots}  
\\
\hline 
2 & a+b & b+c \qquad c+d  
\\
\hline
3 & b+2c & a+b+c \qquad b+c+d   
\\
\hline
4 & a+b+2c & a+b+c+d \qquad b+2c+d  
\\
\hline
5 & a+2b+2c \qquad b+2c+2d &  a+b+2c+d   
\\
\hline
6 & a+b+2c+2d  & a+2b+2c+d  
\\
\hline
7 & a+2b+2c+2d & a+2b+3c+d   
\\
\hline
8 & & a+2b+3c+2d   
\\
\hline
9 & a+2b+4c+2d &   
\\
\hline
10 & a+3b+4c+2d &  
\\
\hline
11 &  2a+3b+4c+2d &  
\\
\hline
\end{array} 
$$

\smallskip 
One sees easily that $R(6), R(7)$ are of type $A_1 + A_1(s)$ and that 
$R(5)$, \resp $R(4)$, is of type $A_2 + A_1(s)$, \resp $A_2(s) + A_1$. By Dynkin's classification, all are of Levi type. 

For $m=2,3$, the long roots $\delta_4 = a+b+2c$ and $\delta_6 = a+b+ 2c+2d$ are not the sum of two elements of $R_m$. 

For $m=2$, since $\delta_4$ is orthogonal to $R_2$, which is of type $C_3$, one obtains that $R(2)$ is of type 
$C_3 + A_1^\dag$, not of Levi type, and $d_2 = 2$. 

For $m=3$, the elements of $R_3$ form the following Dynkin diagrams:
\begin{center}   
\begin{tikzpicture}
\coordinate (2) at (-3,0) node at (2) {$b+2c$};
\coordinate (3) at (0,0) node at (3) {$a+b+c$};
\draw (.75,0) -- (2.25,0);
\coordinate (4) at (3,0) node at (4) {$b+c+d$};
\end{tikzpicture} 
\end{center} 
and $\delta_6$ is connected to $b+2c$ only. Thus $R(3)$ is of type $A_2(s) + A_2^\dag$, not of Levi type, and $d_3 = 3$. 
This completes the proof of Theorem \ref{main-thm}.

\end{document}